\newtheorem{theorem}{Theorem}[section]
\newtheorem{lemma}{Lemma}[section]
\theoremstyle{definition}
\newtheorem{remark}{Remark}[section]
\numberwithin{equation}{section}
\begin{document}

\title[Pairs of solutions for a Minkowski-curvature indefinite Neumann problem]{Pairs of positive radial solutions for a \\Minkowski-curvature Neumann problem \\with indefinite weight}

\author[A.~Boscaggin]{Alberto Boscaggin}

\address{
Department of Mathematics ``Giuseppe Peano'', University of Torino\\
Via Carlo Alberto 10, 10123 Torino, Italy}

\email{alberto.boscaggin@unito.it}

\author[G.~Feltrin]{Guglielmo Feltrin}

\address{
Department of Mathematics, Computer Science and Physics, University of Udine\\
Via delle Scienze 206, 33100 Udine, Italy}

\email{guglielmo.feltrin@uniud.it}

\thanks{Work written under the auspices of the Grup\-po Na\-zio\-na\-le per l'Anali\-si Ma\-te\-ma\-ti\-ca, la Pro\-ba\-bi\-li\-t\`{a} e le lo\-ro Appli\-ca\-zio\-ni (GNAMPA) of the Isti\-tu\-to Na\-zio\-na\-le di Al\-ta Ma\-te\-ma\-ti\-ca (INdAM). The first author is supported by INdAM-GNAMPA project ``Il modello di Born--Infeld per l'elettromagnetismo nonlineare: esistenza, regolarit\`{a} e molteplicit\`{a} di soluzioni''.
\\
\textbf{Preprint -- December 2019}} 

\subjclass{34B08, 34B18, 35B09, 47H11.}

\keywords{Minkowski-curvature operator, indefinite weight, Neumann problem, positive solutions, radial solutions, topological degree.}

\date{}

\dedicatory{}

\begin{abstract}
We prove the existence of a pair of positive radial solutions for the Neumann boundary value problem
\begin{equation*}
\begin{cases}
\, \mathrm{div}\,\Biggl{(} \dfrac{\nabla u}{\sqrt{1- | \nabla u |^{2}}}\Biggr{)} + \lambda a(|x|)u^p = 0, & \text{in $B$,} \\
\, \partial_{\nu}u=0, & \text{on $\partial B$,}
\end{cases}
\end{equation*}
where $B$ is a ball centered at the origin, $a(|x|)$ is a radial sign-changing function with
$\int_B a(|x|)\,\mathrm{d}x < 0$, $p>1$ and $\lambda > 0$ is a large parameter.
The proof is based on the Leray--Schauder degree theory and extends to a larger class of nonlinearities.
\end{abstract}

\maketitle

\section{Introduction}\label{section-1}

Motivated by its interest in Differential Geometry and General Relativity, in the last years a great deal of research has been devoted to the study of boundary value problems associated with the Minkowski-curvature equation 
\begin{equation}\label{eq-inizio}
\mathrm{div}\,\Biggl{(} \dfrac{\nabla u}{\sqrt{1- | \nabla u |^{2}}}\Biggr{)} + f(x,u) = 0, \qquad x \in \Omega \subseteq \mathbb{R}^N,
\end{equation}
both in the ODE case ($N=1$) and in the PDE case ($N \geq 2$); see, for instance, \cite{Az-14,BeJeTo-13,BeMa-07,BoCoNo-pp,BoGa-19ccm,CoObOmRi-13,Ma-13} and the references therein.
Quite frequently, it is assumed that \eqref{eq-inizio} admits the trivial solution (i.e.~$f(x,u) \equiv 0$) and existence/multiplicity of positive solutions of \eqref{eq-inizio} is investigated.

On this line of research, in the recent paper \cite{BoFe-PP} we searched for positive periodic solutions (both harmonic and subharmonic) to the parameter-dependent equation
\begin{equation}\label{eq-ode}
\Biggl{(} \frac{u'}{\sqrt{1-(u')^{2}}}\Biggr{)}' + \lambda  a(x) g(u) = 0,
\end{equation}
where $\lambda > 0$, $a(x)$ is a $T$-periodic sign-changing (i.e.~indefinite) weight function and  
$g(u)$ is a nonlinear term satisfying $g(0) = 0$.
Among other results, we proved therein that a two-solution theorem holds for the $T$-periodic boundary value problem associated with equation \eqref{eq-ode}: more precisely, for weight functions $a(x)$ satisfying the mean value condition $\int_{0}^{T} a(x)\,\mathrm{d}x < 0$
and for a large class of nonlinear terms $g(u)$ which are superlinear at zero (namely, $g(u)/u \to 0$ for $u \to 0^{+}$), two positive $T$-periodic solutions of \eqref{eq-ode} exist, whenever the parameter $\lambda$ is large enough (see \cite[Theorem~3.1]{BoFe-PP} for the precise statement of this result). We refer the reader to the introduction of \cite{BoFe-PP} for several comments about this solvability pattern, arising as a result of a delicate interplay between the behaviors of the nonlinear differential operator driving equation \eqref{eq-ode} and the nonlinear term $a(x)g(u)$ when $u \to +\infty$.

The technical tool used in \cite{BoFe-PP} to prove the above mentioned result is topological degree theory in Banach spaces, along a line of research started in \cite{FeZa-15jde} and later developed and applied in several different situations (cf.~\cite{Fe-18}), always dealing with nonlinear BVPs for semilinear equations of the type
\begin{equation}\label{eq-ode2}
u'' + q(x)g(u) = 0,
\end{equation}
where $q(x)$ is an indefinite weight function. As well known, the first step within this approach is the formulation of the differential equation as a nonlinear functional equation in a Banach space: this can be done in a standard way when considering equation \eqref{eq-ode2}, since the differential operator $Lu = -u''$ is a (linear) Fredholm operator of index zero. Then, depending on the invertibility/non-invertibility of $L$ (and, hence, on the boundary conditions), either classical Leray--Schauder degree theory or Mawhin's coincidence degree theory apply. 

As far as the strongly nonlinear equation \eqref{eq-ode} is concerned, different strategies can be followed to achieve this goal. In \cite{BoFe-PP}, we chose to write \eqref{eq-ode} as the equivalent planar system
\begin{equation*}
u' = \frac{v}{\sqrt{1+v^2}}, \qquad v' = -\lambda a(x)g(u),
\end{equation*}
in order to directly apply coincidence degree theory in the product space, as proposed in the recent paper \cite{FeZa-17tmna}.
This approach, which looks very natural when dealing with the periodic problem, has the drawback of not being suited for other boundary conditions. In particular, in spite of the well-known strong analogies existing in this setting between the periodic and the Neumann boundary value problem (see, for instance, \cite{BoZa-15}), the possibility of proving the Neumann counterpart of the result in \cite{BoFe-PP} is not discussed therein.

The aim of this brief paper is to provide a positive answer to this question. 
More generally, we deal with the Neumann boundary value problem for the PDE version of equation \eqref{eq-ode}, namely
\begin{equation}\label{eq-main-pde}
\begin{cases}
\, \mathrm{div}\,\Biggl{(} \dfrac{\nabla u}{\sqrt{1- | \nabla u |^{2}}}\Biggr{)} + \lambda a(|x|) g(u) = 0, & \text{in $B$,} \\
\, \partial_{\nu}u=0, & \text{on $\partial B$,}
\end{cases}
\end{equation}
where $B$ is a ball of the $N$-dimensional Euclidean space and $a(\vert x \vert)$ is a (sign-changing) radial weight function. 

In this framework, we prove the following two-solution theorem for positive radial solutions of \eqref{eq-main-pde}.

\begin{theorem}\label{th-main}
Let $N \geq 1$ be an integer and let $B \subseteq \mathbb{R}^N$ be an open ball of center the origin and radius $R>0$.
Let $a \colon \mathopen{[}0,R\mathclose{]} \to \mathbb{R}$ be an $L^{1}$-function such that
\begin{itemize}[leftmargin=30pt,labelsep=12pt,itemsep=5pt]
\item [$(a_{*})$]
there exist $m\geq 1$ closed and pairwise disjoint intervals $I^{+}_{1},\ldots,I^{+}_{m}$ in $\mathopen{[}0,R\mathclose{]}$ such that
\begin{align*}
&\qquad\qquad a(r)\geq0, \; \text{ for a.e.~$r\in I^{+}_{i}$,} \quad a\not\equiv0 \; \text{ on $I^{+}_{i}$,} \quad \text{for $i=1,\ldots,m$,} \\
&\qquad\qquad a(r)\leq0, \; \text{ for a.e.~$r\in \mathopen{[}0,R\mathclose{]}\setminus\bigcup_{i=1}^{m}I^{+}_{i}$;}
\end{align*}
\item [$(a_{\#})$] $\displaystyle \int_{B} a(|x|) \,\mathrm{d}x < 0$.
\end{itemize}
Let $g \colon \mathopen{[}0,+\infty\mathclose{[} \to \mathopen{[}0,+\infty\mathclose{[}$ be a continuous function satisfying 
\begin{itemize}[leftmargin=30pt,labelsep=12pt,itemsep=5pt]
\item [$(g_{*})$] $g(0)=0$ and $g(u)>0$, for all $u > 0$;
\item [$(g_{0})$] $\displaystyle \lim_{u\to 0^{+}} \dfrac{g(u)}{u} = 0$ and $\displaystyle \lim_{\substack{u\to0^{+} \\ \omega\to1}}\dfrac{g(\omega u)}{g(u)}=1$;
\item [$(g_{\infty})$] $\displaystyle\lim_{\substack{u\to+\infty \\ \omega\to1}}\dfrac{g(\omega u)}{g(u)}=1$.
\end{itemize}
Then, there exists $\lambda^{*}>0$ such that for every $\lambda>\lambda^{*}$ there exist at least two positive radial solutions of problem \eqref{eq-main-pde}.
\end{theorem}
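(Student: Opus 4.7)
\emph{Step 1 (radial reduction).} Setting $u(x)=u(|x|)$ and writing $\phi(s)=s/\sqrt{1-s^{2}}$, problem \eqref{eq-main-pde} is equivalent to the singular ODE Neumann BVP
\begin{equation*}
\bigl(r^{N-1}\phi(u')\bigr)' + \lambda\,r^{N-1}a(r)g(u) = 0, \qquad u'(0)=u'(R)=0,
\end{equation*}
on $[0,R]$. A crucial feature is that the Minkowski structure automatically enforces $|u'|<1$, so any (classical) solution satisfies the oscillation bound $\max u-\min u\le R$; in particular a pointwise bound at one point becomes a global $L^{\infty}$ bound, and ``large solutions'' are automatically close to a large constant.

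\emph{Step 2 (fixed-point formulation).} Since constants lie in the kernel of the linear part, I would use a coincidence-degree style decomposition in the spirit of \cite{FeZa-17tmna,BoFe-PP}, adapted to the Neumann setting. Integrating once and applying $\phi^{-1}(\eta)=\eta/\sqrt{1+\eta^{2}}$ recovers $u'$ as a nonlinear functional of $u$; a further integration from $0$ gives $u=c+(\text{nonlinear correction})$, and the endpoint condition $u'(R)=0$ collapses to the scalar compatibility condition
\begin{equation*}
\int_{0}^{R} r^{N-1}a(r)g(u(r))\,\mathrm{d}r = 0,
\end{equation*}
which implicitly pins down the average~$c$. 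Extending $g\equiv0$ on $(-\infty,0]$ and ruling out nontrivial sign-changing fixed points via a maximum-principle argument based on $(g_{*})$ and the structure of $a$, this packages into a completely continuous operator $T_{\lambda}$ on $C([0,R])$ whose nonzero fixed points are exactly the positive radial solutions.

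\emph{Step 3 (three degree computations).} Following the blueprint of \cite{FeZa-15jde,Fe-18} as used in \cite{BoFe-PP}, I would evaluate the Leray--Schauder degree $\deg(I-T_{\lambda},\,\cdot\,,0)$ on a nested chain of open sets of $C([0,R])$:
\begin{itemize}
\item A \emph{small} set $\Omega_{0}=\{u:\,0<\min u,\ \|u\|_{\infty}<r_{0}\}$: using $(g_{0})$ and the linearization at the origin, homotope to a map whose only fixed point in the closure is $u\equiv0$ (which lies outside $\Omega_{0}$), yielding $\deg(I-T_{\lambda},\Omega_{0},0)=0$.
\item An \emph{intermediate} ball $\Omega_{1}=\{\|u\|_{\infty}<M_{0}\}$ with $M_{0}\gg r_{0}$: through a homotopy to a suitable autonomous problem, and exploiting the dynamics on one of the positivity intervals $I_{i}^{+}$, obtain $\deg(I-T_{\lambda},\Omega_{1},0)=1$ provided $\lambda>\lambda^{*}$.
\item A \emph{large} ball $\Omega_{2}=\{\|u\|_{\infty}<M\}$ with $M\gg M_{0}$: exploiting the a priori bound obtained from $(a_{\#})$ and $(g_{\infty})$ together with the Mawhin stretching trick (adding a large constant forcing breaks admissibility), conclude $\deg(I-T_{\lambda},\Omega_{2},0)=0$.
\end{itemize}
By additivity and excision, the discrepancies $\deg(\Omega_{1})-\deg(\Omega_{0})\ne0$ and $\deg(\Omega_{2})-\deg(\Omega_{1})\ne0$ produce two distinct nontrivial fixed points of $T_{\lambda}$, and hence two positive radial solutions of \eqref{eq-main-pde}.

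\emph{Step 4 (main obstacle).} The decisive step is the a priori bound underpinning the computation on $\Omega_{2}$. Since $|u'|<1$ forces oscillation at most $R$, a solution with $\|u\|_{\infty}$ very large must be close to a large constant~$K$; normalizing the compatibility identity by $g(K)$, the regularity assumption $(g_{\infty})$ ensures $g(u(r))/g(K)\to1$ uniformly in $r\in[0,R]$, and the identity tends in the limit to $\int_{0}^{R}r^{N-1}a(r)\,\mathrm{d}r=0$, contradicting $(a_{\#})$. Making this blow-up/rescaling argument fully rigorous, and verifying that the various stretching homotopies in Step~3 remain admissible for the quasilinear Minkowski operator (where, in contrast with the semilinear case of \cite{FeZa-15jde}, one cannot add arbitrarily large forcings without destroying the condition $|u'|<1$), is where most of the technical effort concentrates.
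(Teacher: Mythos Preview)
Your overall architecture (radial reduction, fixed-point reformulation, three nested open sets, additivity) matches the paper, and you correctly isolate the decisive a~priori bound in Step~4. However, you have the \emph{degree pattern inverted}, and the mechanisms you attach to each set are the wrong ones.

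In the paper the pattern is $(1,0,1)$, not $(0,1,0)$. Concretely: the a~priori bound coming from $|u'|<1$ together with $(g_{\infty})$ and $(a_{\#})$ (your Step~4) is used on the \emph{large} ball to show that no solution has $\|u\|_{\infty}=D^{*}$ for any $\vartheta\in(0,1]$; one then homotopes $\vartheta\to 0$, where the problem collapses to the scalar map $s\mapsto f^{\#}(s)$ on $(-D^{*},D^{*})$, and $(a_{\#})$ gives Brouwer degree $1$. The same $\vartheta$-homotopy, combined with $(g_{0})$, gives degree $1$ on the \emph{small} ball. The ``stretching'' homotopy $u\mapsto u+\alpha v$ with $v=\mathbf 1_{\cup I_i^+}$ is used only on the \emph{intermediate} ball $B(0,\delta^{*})$, where the choice of $\delta^{*}$ is delicately tuned so that $\|u\|_{\infty}=\delta^{*}$ forces $|u'|\le 1/2$ on a central sub-interval of some $I_i^{+}$ and hence a uniform positive lower bound there; integrating the equation then contradicts $\lambda>\lambda^{*}$. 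This gives degree $0$ on the intermediate ball. Your proposal to run the stretching trick on the large ball fails because the estimate $|u'|\le 1/2$ hinges on the specific smallness of $\delta^{*}$ and does not survive replacing $\delta^{*}$ by a large $D^{*}$; conversely, your proposed ``homotopy to an autonomous problem exploiting dynamics on $I_i^{+}$'' to obtain degree $1$ on the intermediate ball has no clear mechanism.

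Two secondary points. First, extending $g\equiv 0$ on $(-\infty,0]$ is problematic: then \emph{every} nonpositive constant solves the Neumann problem, so the degree on any ball $B(0,d)$ is undefined. The paper instead sets $f(r,u)=-u$ for $u<0$, which kills the negative constants while still allowing a maximum-principle argument. Second, your worry that a large additive forcing might destroy $|u'|<1$ is unfounded here: since $\phi^{-1}$ maps $\mathbb{R}$ into $(-1,1)$, the fixed-point operator automatically produces $|u'|<1$ regardless of how large $\alpha$ is, so the $\alpha$-homotopy is unproblematic in that respect.
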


Notice that no Sobolev subcriticality assumptions are required for the nonlinear term: for instance, the function $g(u) = u^{p}$ enters the setting of Theorem~\ref{th-main} for every $p > 1$. The only restrictions on the growth at infinity come from assumption
$(g_{\infty})$: as an example, a function behaving, for $u$ large, like $e^u$ cannot be treated. Observe that a dual condition is required also at $u = 0$; for some comments about these assumptions (of regular-oscillation type) we refer to \cite[p.~452--453]{BoFeZa-16}.

\begin{figure}[!htb]
\begin{tikzpicture}[scale=1]
\begin{axis}[
  tick label style={font=\scriptsize},
  axis y line=left, 
  axis x line=middle,
  xtick={0.359781, 1.39176, 2.60244, 4.3119 ,5},
  ytick={-1,0,1},
  xticklabels={,,,,$5$},
  yticklabels={$-1$,$0$,$1$},
  xlabel={\small $|x|$},
  ylabel={\small $a(|x|)$},
every axis x label/.style={
    at={(ticklabel* cs:1.0)},
    anchor=west,
},
every axis y label/.style={
    at={(ticklabel* cs:1.0)},
    anchor=south,
},
  width=5.5cm,
  height=4.5cm,
  xmin=0,
  xmax=6,
  ymin=-1.5,
  ymax=1.5]
\addplot graphics[xmin=0,xmax=5,ymin=-1.1,ymax=1.1] {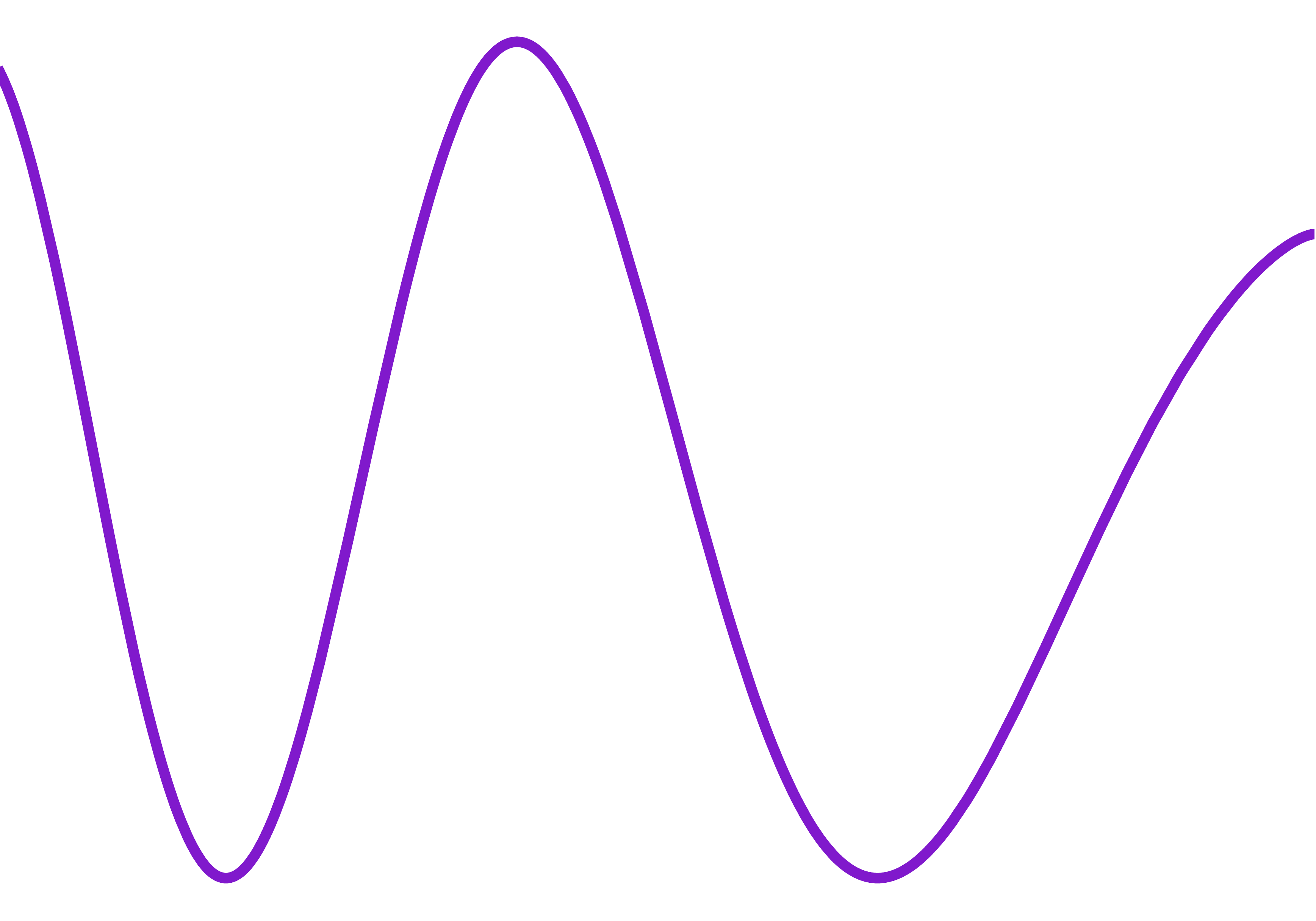};
\end{axis}
\end{tikzpicture}
\qquad
\begin{tikzpicture}[scale=1]
\begin{axis}[
  tick label style={font=\scriptsize,major tick length=3pt},
          scale only axis,
  enlargelimits=false,
  xtick={0, 0.359781, 1.39176, 2.60244, 4.3119 ,5},
  xticklabels={$0$,,,,,$5$},
  ytick={0,8},
  max space between ticks=50,
                minor y tick num=3,                
  xlabel={\small $|x|$},
  ylabel={\small $u(|x|)$},
every axis x label/.style={
below,
at={(1.9cm,0.1cm)},
  yshift=-8pt
  },
every axis y label/.style={
below,
at={(0cm,1.4cm)},
  xshift=-3pt},
  y label style={rotate=90,anchor=south},
  width=3.8cm,
  height=2.8cm,  
  xmin=0,
  xmax=5,
  ymin=0,
  ymax=8]
\addplot graphics[xmin=0,xmax=5,ymin=0,ymax=8] {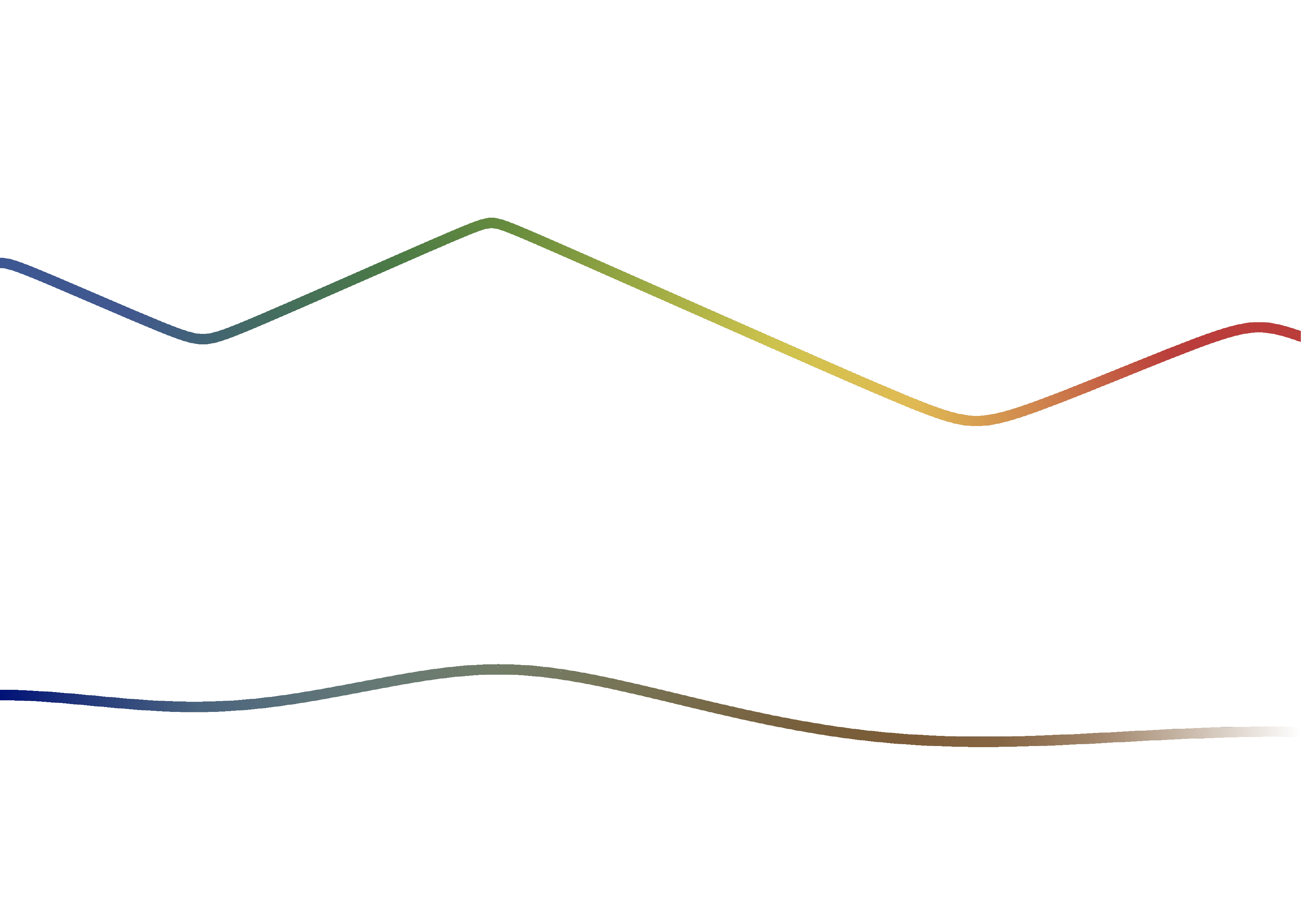};
\end{axis}
\end{tikzpicture}
\vspace{10pt}\\
\includegraphics[width=0.46\linewidth]{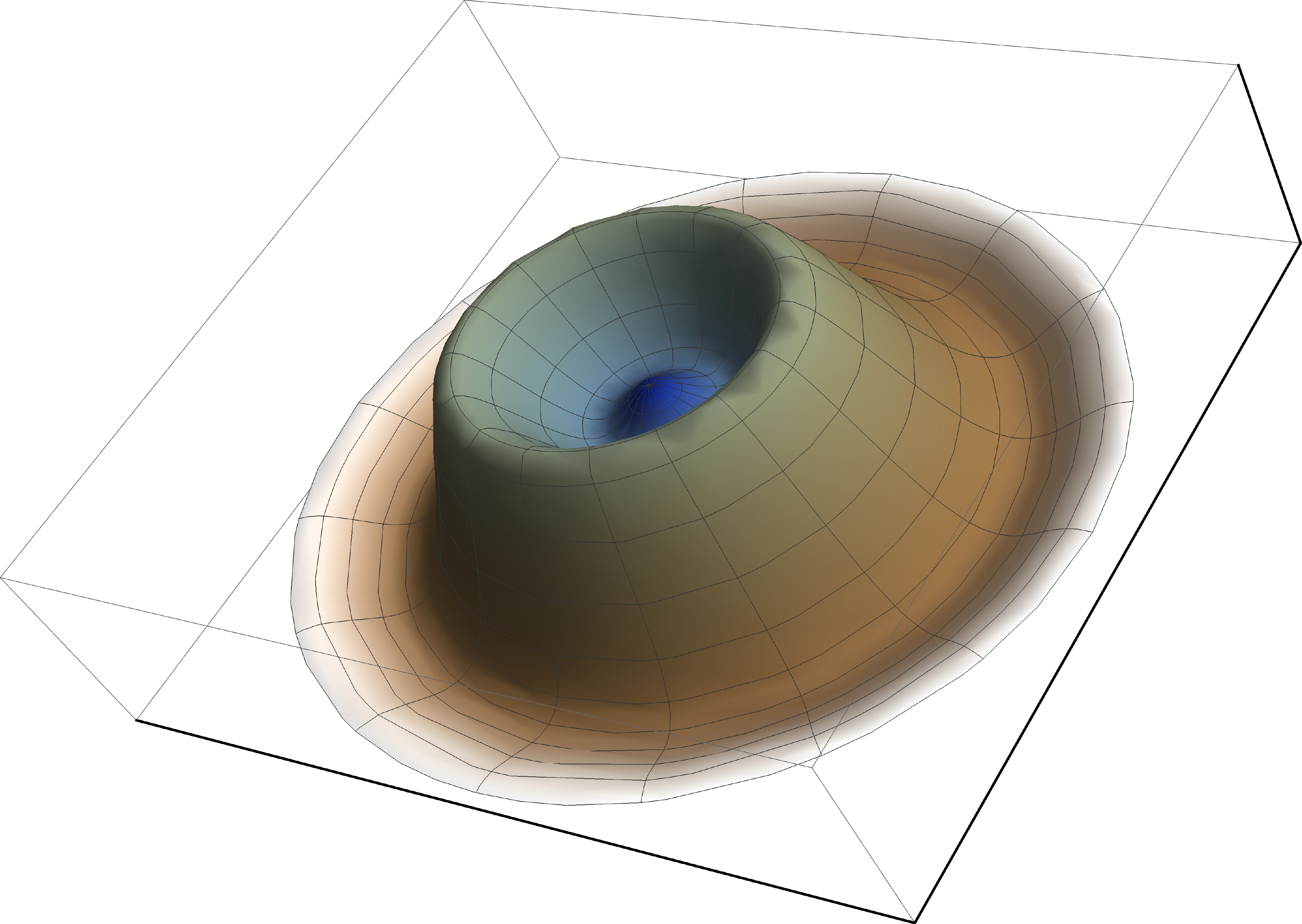}
\includegraphics[width=0.46\linewidth]{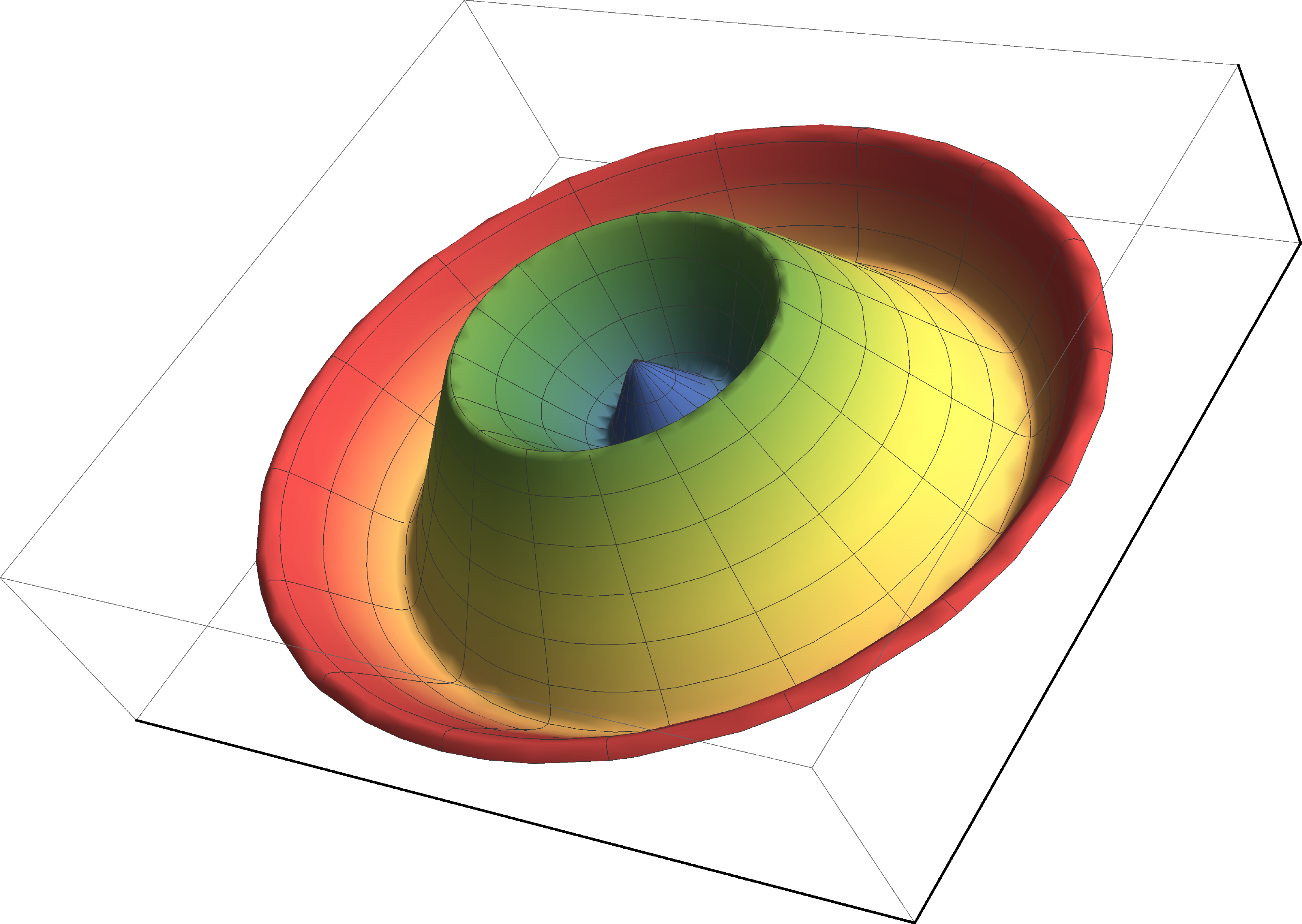}
\caption{The picture shows the graph of the weight function $a(|x|)=(\cos(||x|-5|^{3/2}+1)$ in $\mathopen{[}0,R\mathclose{]}=\mathopen{[}0,5\mathclose{]}$ and the graphs of the two radial solutions of problem \eqref{eq-main-pde} where $N=2$, $g(u)=u^{2}+u^{3}$ and $\lambda=0.1$. We notice that the large solution appears quite ``sharp-cornered'', in agreement with the analysis in \cite[Section~4]{BoFe-PP} and \cite[Section~3.2]{BoGa-19ccm}.}    
\label{fig-01}
\end{figure}

The proof of our main result is based again on topological degree theory, but, of course, with some differences with respect to the approach in \cite{BoFe-PP}. In particular, after having converted the radial Neumann problem \eqref{eq-main-pde} into the (singular) one-dimensional problem
\begin{equation}\label{eq-main}
\begin{cases}
\, \Biggl{(} r^{N-1}\dfrac{u'}{\sqrt{1-(u')^{2}}}\Biggr{)}' + \lambda r^{N-1} a(r) g(u) = 0,\\
\, u'(0) = u'(R) = 0,
\end{cases}
\end{equation}
we write it as a fixed point equation in the Banach space $\mathcal{C}(\mathopen{[}0,R\mathclose{]})$, by exploiting a strategy comparable to the one used, for instance, in \cite{BeJeMa-10,GaMaZa-93} (see also Remark~\ref{rem-2.1}). At this point, Leray--Schauder degree theory can be applied: the computation of the degree on three different balls of the space $\mathcal{C}(\mathopen{[}0,R\mathclose{]})$ leads to the result, similarly as in \cite{BoFe-PP,BoFeZa-16}.

The paper is organized as follows. In Section~\ref{section-2}, we describe the abstract setting and we present two lemmas for the computation of the degree. We give here all the details of the arguments, since it seems that no appropriate reference exists for the auxiliary results that we need. In Section~\ref{section-3}, we give the proof of Theorem~\ref{th-main}, by showing the applicability of the above mentioned lemmas, under the present assumptions. Compared to the case $N=1$, some technical difficulties arise when $N \geq 2$. First, the convexity (respectively, concavity) of the solutions is not a priori prescribed by the negativity (respectively, positivity) of the weight function $a(r)$. Second, 
when the weight function is positive near the center of the ball, as expected, some further care is needed in the estimates in order to exclude the possible appearance of a ``sudden loss of energy'' for the solutions (cf.~\cite{GaMaZa-97}). We stress that, due to the peculiar features of the Minkowski curvature operator, this can be successfully done even with no subcriticality assumptions on the nonlinear term.

\section{The abstract setting}\label{section-2}

In this section we present an abstract formulation of the problem and we prove some preliminary results based on the theory of the topological degree.

\subsection{The fixed point reduction}\label{section-2.1}

Throughout this section, we deal with the boundary value problem 
\begin{equation}\label{eq-phi}
\begin{cases}
\, \bigl{(} r^{N-1}\varphi(u')\bigr{)}' + r^{N-1} f(r,u) = 0, \\
\, u'(0) = u'(R) = 0,
\end{cases}
\end{equation} 
where $N \geq 1$ is an integer,
\begin{equation*}
\varphi(s) = \frac{s}{\sqrt{1-s^2}}, \quad s \in \mathopen{]}-1,1\mathclose{[},
\end{equation*}
and $f \colon \mathopen{[}0,R\mathclose{]} \times \mathbb{R} \to \mathbb{R}$ is an $L^{1}$-Carath\'{e}odory function (i.e.~$f(\cdot,u)$ is measurable for every $u \in \mathbb{R}$, $f(r,\cdot)$ is continuous for a.e. $r \in \mathopen{[}0,R\mathclose{]}$, and for every $K > 0$
there exists $\eta_k \in L^1(0,R)$ such that $|f(r,u)| \leq \eta_k(r)$ for a.e. $r \in \mathopen{[}0,R\mathclose{]}$ and for every 
$| u| \leq K$). Let us recall that a solution of \eqref{eq-phi} is a continuously differentiable function $u\colon \mathopen{[}0,R\mathclose{]} \to \mathbb{R}$, with $u'(0) = u'(R) = 0$ and $|u'(r)| < 1$ for every $r \in \mathopen{]}0,R\mathclose{[}$, such that the map $r \mapsto r^{N-1}\varphi(u')$ is absolutely continuous on $\mathopen{[}0,R\mathclose{]}$ and the differential equation in \eqref{eq-phi} is satisfied almost everywhere (see also Remark \ref{regolarita}).

It is convenient for the sequel to embed problem \eqref{eq-phi} into the two-parameter family of problems
\begin{equation}\label{eq-phi-theta}
\begin{cases}
\, \bigl{(} r^{N-1}\varphi(u')\bigr{)}' + \vartheta r^{N-1} \bigl{[} f(r,u) + \alpha v(r) \bigr{]} = 0, \\
\, u'(0) = u'(R) = 0,
\end{cases}
\end{equation} 
where $\vartheta\in\mathopen{[}0,1\mathclose{]}$, $\alpha\geq0$ and $v(r)$ is a fixed Lebesgue integrable function.
Notice that \eqref{eq-phi} is exactly \eqref{eq-phi-theta} with $\vartheta=1$ and $\alpha=0$.

Our aim is to write \eqref{eq-phi-theta} as a fixed point problem in the Banach space $\mathcal{C}(\mathopen{[}0,R\mathclose{]})$ of continuous functions in $\mathopen{[}0,R\mathclose{]}$, endowed with the usual norm $\|\cdot\|_{\infty}$. Accordingly, for $\vartheta\in\mathopen{[}0,1\mathclose{]}$ and $\alpha\geq0$, we introduce the operator
\begin{equation*}
T_{\vartheta,\alpha} \colon \mathcal{C}(\mathopen{[}0,R\mathclose{]}) \to \mathcal{C}(\mathopen{[}0,R\mathclose{]}),
\end{equation*}
defined as
\begin{equation}\label{operator}
\begin{aligned}
(T_{\vartheta,\alpha} u) (r) &= u(0) - \dfrac{1}{R^{N-1}} \int_{0}^{R} \zeta^{N-1} \bigl{[} f(\zeta,u(\zeta)) + \alpha v(\zeta) \bigr{]} \,\mathrm{d}\zeta
 \\ &\quad + \int_{0}^{r} \varphi^{-1} \biggl{(} -\dfrac{\vartheta}{\zeta^{N-1}} \int_{0}^{\zeta}\xi^{N-1} \bigl{[} f(\xi,u(\xi)) + \alpha v(\xi) \bigr{]}\,\mathrm{d}\xi \biggr{)} \,\mathrm{d}\zeta,
\end{aligned}
\end{equation}
for every $u\in\mathcal{C}(\mathopen{[}0,R\mathclose{]})$.
Notice that the definition of $T_{\vartheta,\alpha}$ is well-posed, since the function
\begin{equation*}
\mathopen{]}0,R\mathclose{]} \ni \zeta \mapsto \dfrac{1}{\zeta^{N-1}} \int_{0}^{\zeta}\xi^{N-1} \bigl{[} f(\xi,u(\xi)) + \alpha v(\xi) \bigr{]}\,\mathrm{d}\xi
\end{equation*}
continuously extends up to $\zeta=0$. For future convenience, we also observe that $T_{\vartheta,\alpha} u\in\mathcal{C}^{1}(\mathopen{[}0,R\mathclose{]})$ with
\begin{equation}\label{diff-operator}
(T_{\vartheta,\alpha} u)' (r) = 
\begin{cases}
\, \displaystyle \varphi^{-1} \biggl{(} -\dfrac{\vartheta}{r^{N-1}} \int_{0}^{r}\xi^{N-1} \bigl{[} f(\xi,u(\xi)) + \alpha v(\xi) \bigr{]}\,\mathrm{d}\xi \biggr{)}, &\text{if $r\in\mathopen{]}0,R\mathclose{]}$,} \\
\, 0, &\text{if $r=0$,}
\end{cases}
\end{equation}
for every $u\in\mathcal{C}(\mathopen{[}0,R\mathclose{]})$.

The following result holds true.

\begin{theorem}\label{th-operator}
Let $f \colon \mathopen{[}0,R\mathclose{]} \times \mathbb{R} \to \mathbb{R}$ be an $L^{1}$-Carath\'{e}odory function, $v \in L^1(0,R)$, 
$\vartheta\in\mathopen{[}0,1\mathclose{]}$, and $\alpha\geq0$. Then, the operator $T_{\vartheta,\alpha}$ defined in \eqref{operator} is completely continuous. Moreover, $u(r)$ is a solution of \eqref{eq-phi-theta} if and only if $u\in\mathcal{C}(\mathopen{[}0,R\mathclose{]})$ is a fixed point of $T_{\vartheta,\alpha}$.
\end{theorem}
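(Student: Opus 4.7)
The proof falls naturally into two essentially independent parts: the equivalence between fixed points of $T_{\vartheta,\alpha}$ and solutions of \eqref{eq-phi-theta}, and the complete continuity of $T_{\vartheta,\alpha}$ on $\mathcal{C}(\mathopen{[}0,R\mathclose{]})$. I would address them in this order, because the equivalence explains the precise shape of \eqref{operator}: the constant term $u(0)-R^{-(N-1)}\int_0^R \zeta^{N-1}[f+\alpha v]\,\mathrm{d}\zeta$ is constructed exactly so that the Neumann condition $u'(R)=0$ becomes automatic for any fixed point.

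\textbf{Equivalence of fixed points and solutions.} Suppose $u=T_{\vartheta,\alpha}u$. Evaluation at $r=0$ cancels the $u(0)$ terms on both sides and forces
\begin{equation*}
\int_0^R \zeta^{N-1}\bigl[f(\zeta,u(\zeta))+\alpha v(\zeta)\bigr]\,\mathrm{d}\zeta = 0.
\end{equation*}
Inserted into \eqref{diff-operator} at $r=R$, this yields $u'(R)=\varphi^{-1}(0)=0$; the condition $u'(0)=0$ is already built into the case $r=0$ of \eqref{diff-operator}. From \eqref{diff-operator} and $u'=(T_{\vartheta,\alpha}u)'$, applying $\varphi$ and multiplying by $r^{N-1}$ gives
\begin{equation*}
r^{N-1}\varphi(u'(r)) = -\vartheta\int_0^r \xi^{N-1}\bigl[f(\xi,u(\xi))+\alpha v(\xi)\bigr]\,\mathrm{d}\xi,
\end{equation*}
whose right-hand side is absolutely continuous with a.e.\ derivative $-\vartheta r^{N-1}[f(r,u(r))+\alpha v(r)]$; this is the ODE of \eqref{eq-phi-theta}. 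Conversely, given a solution, integrating the ODE from $0$ to $r$ (using $u'(0)=0$) and applying $\varphi^{-1}$ reproduces \eqref{diff-operator}, while the Neumann condition at $r=R$ produces precisely the integral identity needed for the bracketed constant in \eqref{operator} to match $u(0)$; a final integration yields the fixed-point formula.

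\textbf{Complete continuity.} The decisive structural fact is that $|\varphi^{-1}(s)|<1$ for every $s\in\mathbb{R}$, so \eqref{diff-operator} delivers the universal bound $\|(T_{\vartheta,\alpha}u)'\|_\infty\leq 1$; in particular, the entire image of $T_{\vartheta,\alpha}$ is equi-Lipschitz with constant $1$, independently of the parameters. On a bounded set $\mathcal{B}_K=\{u:\|u\|_\infty\leq K\}$ the Carath\'eodory envelope $\eta_K$ controls $|f(\xi,u(\xi))|$, so the constant term in \eqref{operator} stays bounded uniformly on $\mathcal{B}_K$, and $T_{\vartheta,\alpha}(\mathcal{B}_K)$ is both uniformly bounded and equicontinuous; Arzel\`a--Ascoli then delivers relative compactness. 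For sequential continuity, take $u_n\to u$ in $\mathcal{C}(\mathopen{[}0,R\mathclose{]})$: the integrands $\xi^{N-1}[f(\xi,u_n(\xi))+\alpha v(\xi)]$ converge pointwise a.e.\ and are dominated by the $L^1$ function $R^{N-1}(\eta_K+\alpha|v|)$, so Lebesgue's dominated convergence theorem (combined with the continuity of $\varphi^{-1}$) gives pointwise convergence $T_{\vartheta,\alpha}u_n(r)\to T_{\vartheta,\alpha}u(r)$ for each $r$, which the uniform Lipschitz bound promotes to uniform convergence.

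\textbf{Main obstacle.} The only genuinely delicate point is the apparent singularity at $r=0$: one must verify that
\begin{equation*}
\zeta\mapsto \zeta^{-(N-1)}\int_0^\zeta \xi^{N-1}\bigl[f(\xi,u(\xi))+\alpha v(\xi)\bigr]\,\mathrm{d}\xi
\end{equation*}
extends continuously to $\zeta=0$ with value $0$, uniformly on bounded sets of $\mathcal{C}(\mathopen{[}0,R\mathclose{]})$. This follows from the elementary estimate $\zeta^{-(N-1)}\bigl|\int_0^\zeta \xi^{N-1}h(\xi)\,\mathrm{d}\xi\bigr|\leq \int_0^\zeta |h(\xi)|\,\mathrm{d}\xi$ together with absolute continuity of the Lebesgue integral applied to $h=\eta_K+\alpha|v|$. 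Once this is in place, nothing else in the argument is hindered by the $r^{N-1}$ weight, and the whole scheme reduces to the standard fixed-point reformulation one would perform in the non-singular case $N=1$.
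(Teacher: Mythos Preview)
Your proposal is correct and follows essentially the same route as the paper's proof: the equivalence is obtained by evaluating the fixed-point identity at $r=0$ to extract the integral constraint, then differentiating and applying $\varphi$ to recover the ODE (and conversely integrating the ODE), while complete continuity comes from Arzel\`a--Ascoli via the uniform derivative bound $|\varphi^{-1}|<1$ together with the Carath\'eodory envelope. You supply more detail than the paper in two places---the explicit dominated-convergence argument for continuity (the paper just says ``composition of continuous maps'') and the quantitative handling of the $\zeta\to 0$ singularity---but these are elaborations of the same underlying argument rather than a different approach.
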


\begin{proof}
The continuity of the operator $T_{\vartheta,\alpha}$ follows straightforwardly by observing that $T_{\vartheta,\alpha}$ is a composition of continuous maps. We claim that $T_{\vartheta,\alpha}$ sends bounded sets into  relatively compact sets. As a well-known consequence of Ascoli--Arzel\`{a} theorem, this is true if we prove that $\{T_{\vartheta,\alpha}u_{n}\}_{n}$ and $\{(T_{\vartheta,\alpha}u_{n})'\}_{n}$ are uniformly bounded for every bounded sequence $\{u_{n}\}_{n}$ in $\mathcal{C}(\mathopen{[}0,R\mathclose{]})$. This easily follows from the regularity assumptions on $f(r,u)$ and $v(r)$.

We prove the equivalence between the Neumann boundary value problem \eqref{eq-phi-theta} and the fixed point problem $u=T_{\vartheta,\alpha}u$.
Let $u(r)$ be a solution of \eqref{eq-phi-theta}, hence $u\in\mathcal{C}(\mathopen{[}0,R\mathclose{]})$. By integrating the equation in \eqref{eq-phi-theta} in $\mathopen{[}0,r\mathclose{]}$, recalling that $u'(0)=0$, and dividing by $r^{N-1}$, we obtain
\begin{equation}\label{eq-phi-th2.1}
\varphi(u'(r)) = -\dfrac{\vartheta}{r^{N-1}} \int_{0}^{r} \zeta^{N-1} \bigl{[} f(\zeta,u(\zeta)) + \alpha v(\zeta) \bigr{]} \,\mathrm{d}\zeta, \quad \text{for all $r\in\mathopen{]}0,R\mathclose{]}$.}
\end{equation}
Since $u'(R)=0$ we deduce that
\begin{equation}\label{eq-averR}
\dfrac{1}{R^{N-1}} \int_{0}^{R}\zeta^{N-1} f(\zeta,u(\zeta))\,\mathrm{d}\zeta =0.
\end{equation}
By applying $\varphi^{-1}$ to \eqref{eq-phi-th2.1} and integrating in $\mathopen{[}0,r\mathclose{]}$, we thus deduce that $u=T_{\vartheta,\alpha}u$.

On the other hand, let $u\in\mathcal{C}(\mathopen{[}0,R\mathclose{]})$ be such that $u=T_{\vartheta,\alpha}u$. Therefore, we have $u\in\mathcal{C}^{1}(\mathopen{[}0,R\mathclose{]})$ and $u'$ is given by the right-hand side of formula \eqref{diff-operator}. In particular, $u'(0)=0$ and $|u'(r)|<1$ for every $r\in\mathopen{]}0,R\mathclose{[}$. 
Next, by computing $u(0)=(Tu)(0)$, we obtain that \eqref{eq-averR} holds and so $u'(R)=0$.
By computing $\varphi(u')$, multiplying by $r^{N-1}$, and differentiating, we finally obtain that $u(r)$ solves \eqref{eq-phi-theta}. The proof is thus completed.
\end{proof}

\begin{remark}\label{regolarita}
Let us notice that, for every solution $u(r)$ of \eqref{eq-phi-theta}, from \eqref{eq-phi-th2.1} it follows that $\varphi(u')$ is absolutely continuous on $\mathopen{[}0,R\mathclose{]}$. Therefore, since $\varphi^{-1}$ is smooth, $u'$ is absolutely continuous as well.
In particular, if both $f(r,u)$ and $v(r)$ are continuous functions, then $u \in \mathcal{C}^2(\mathopen{[}0,R\mathclose{]})$ (cf.~\cite[Remark~3.3]{CoCoRi-14}).
$\hfill\lhd$
\end{remark}

As a consequence of Theorem~\ref{th-operator}, if $\Omega\subseteq \mathcal{C}(\mathopen{[}0,R\mathclose{]})$ is an open and bounded set such that
\begin{equation*}
u \neq T_{\vartheta,\alpha} u, \quad \text{for all $u\in\partial\Omega$,}
\end{equation*}
the Leray--Schauder degree $\mathrm{deg}_{\mathrm{LS}}(\mathrm{Id}-T_{\vartheta,\alpha},\Omega,0)$ is well-defined (we refer to \cite{De-85} for a classical reference about topological degree theory).

\begin{remark}\label{rem-2.1}
Dealing with the (one-dimensional) periodic boundary value problem
\begin{equation}\label{eq-per}
\begin{cases}
\, \bigl{(} \varphi(u')\bigr{)}' +  f(r,u) = 0, \\
\, u(0) = u(R), \; u'(0) = u'(R),
\end{cases}
\end{equation} 
a similar strategy can be followed in order to obtain a functional analytic formulation. Precisely, it can be seen (cf.~\cite{BeMa-07,MaMa-98}) that $u(r)$ is a solution of \eqref{eq-per} if and only if 
$u \in \mathcal{C}(\mathopen{[}0,R\mathclose{]})$ is a fixed point of the operator $\widetilde T \colon \mathcal{C}(\mathopen{[}0,R\mathclose{]}) \to \mathcal{C}(\mathopen{[}0,R\mathclose{]})$ defined as 
\begin{align*}
 (\widetilde T u) (r) &= u(0) -\int_{0}^{R} f(\zeta,u(\zeta))  \,\mathrm{d}\zeta
 \\ &\quad + \int_{0}^{r} \varphi^{-1} \biggl{(} - \int_{0}^{\zeta} f(\xi,u(\xi))\,\mathrm{d}\xi + Q_{\varphi}\left( \int_0^{\odot} 
f(\eta,u(\eta))\,\mathrm{d}\eta\right) \biggr{)} \,\mathrm{d}\zeta,
\end{align*}
where, for each $h \in \mathcal{C}(\mathopen{[}0,R\mathclose{]})$, $Q_\varphi(h) = Q_\varphi(h(\odot)) \in \mathbb{R}$ is defined as the unique solution of the equation
\begin{equation*}
\int_0^R \varphi^{-1}\left( -h(r) + Q_\varphi(h)\right)\,\mathrm{d}r = 0.
\end{equation*}
It is worth noticing that, compared with the Neumann case, this formulation is slightly less transparent: indeed, due to the non-locality of the
periodic boundary conditions, the additional term $Q_\varphi$ appears.  
An alternative fixed point formulation for \eqref{eq-per}, relying on a direct use of coincidence degree theory for the equivalent planar system
$u' = \varphi^{-1}(v)$, $v' = - f(r,u)$, has been recently proposed in \cite{FeZa-17tmna}.
$\hfill\lhd$
\end{remark}

\subsection{Two degree lemmas}\label{section-2.2}

Taking advantage of the abstract setting just presented, we now prove two lemmas for the computation of the degree on open balls $B(0,d)$ (with center $0$ and radius $d>0$) of the Banach space $\mathcal{C}(\mathopen{[}0,R\mathclose{]})$ in the framework of the Neumann problem
\begin{equation}\label{eq-phi-ag}
\begin{cases}
\, \bigl{(} r^{N-1}\varphi(u')\bigr{)}' + \lambda r^{N-1} a(r)g(u) = 0, \\
\, u'(0) = u'(R) = 0,
\end{cases}
\end{equation} 
where $a \colon \mathopen{[}0,R\mathclose{]} \to \mathbb{R}$ is an $L^1$-function, $g \colon \mathopen{[}0,+\infty\mathclose{[} \to \mathopen{[}0,+\infty\mathclose{[}$ is a continuous function satisfying $g(0) = 0$, and $\lambda>0$. 
Notice that, to enter the setting of the previous section, we need for the nonlinearity appearing in the equation to be defined for every $u \in \mathbb{R}$; accordingly, we set
\begin{equation*}
f(r,u) = \begin{cases}
\, \lambda a(r) g(u), & \text{if $u\geq0$,} \\
\, -u,& \text{if $u<0$.} 
\end{cases}
\end{equation*}
Observe that, due to the assumptions on $a(r)$ and $g(u)$, the function $f(r,u)$ is $L^{1}$-Carath\'{e}odory.

Recalling the definition \eqref{operator} of $T_{\vartheta,\alpha}$, we thus compute the Leray--Schauder degree of the map $\mathrm{Id}-T_{1,0}$. Observe that, by standard maximum principle arguments (based on the monotonicity of the map $r \mapsto r^{N-1}\varphi(u'(r))$ when $u(r)<0$), $u = T_{1,0} u$ implies that $u(r)$ is a \emph{non-negative} solution of \eqref{eq-phi} and thus solves \eqref{eq-phi-ag}.

The first lemma gives conditions for zero degree.

\begin{lemma}\label{lem-deg0}
Let $a \colon \mathopen{[}0,R\mathclose{]} \to \mathbb{R}$ be an $L^1$-function, let $g \colon \mathopen{[}0,+\infty\mathclose{[} \to \mathopen{[}0,+\infty\mathclose{[}$ be a continuous function satisfying $g(0) = 0$, and $\lambda>0$. 
Let $d>0$ and assume that there exists a non-negative function $v \in L^1(0,R)$, with $v\not\equiv0$, such that the following properties hold:
\begin{itemize}
\item[$(H_{1})$]
If $\alpha \geq 0$ and $u(r)$ is a non-negative solution of
\begin{equation}\label{eq-lem-deg0}
\begin{cases}
\, \bigl{(} r^{N-1}\varphi(u')\bigr{)}' + \lambda r^{N-1} a(r) g(u) + \alpha r^{N-1} v(r) = 0, \\
\, u'(0)=u'(R)=0,
\end{cases}
\end{equation}
then $\|u\|_{\infty}\neq d$.
\item[$(H_{2})$]
There exists $\alpha_{0} \geq 0$ such that problem \eqref{eq-lem-deg0}, with $\alpha=\alpha_{0}$, has no non-negative solutions $u(r)$ with $\|u\|_{\infty}\leq d$.
\end{itemize}
Then, it holds that $\mathrm{deg}_{\mathrm{LS}}(\mathrm{Id}-T_{1,0},B(0,d),0) = 0$.
\end{lemma}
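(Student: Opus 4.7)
The plan is to exploit the two-parameter family $T_{\vartheta,\alpha}$ introduced in Section~\ref{section-2.1} and invoke homotopy invariance of the Leray--Schauder degree along the parameter $\alpha$. Concretely, I would consider the family $\{T_{1,\alpha}\}_{\alpha\in\mathopen{[}0,\alpha_{0}\mathclose{]}}$, which, by the same arguments as in the proof of Theorem~\ref{th-operator}, consists of completely continuous operators on $\mathcal{C}(\mathopen{[}0,R\mathclose{]})$ depending continuously on $\alpha$; hence $(u,\alpha)\mapsto u-T_{1,\alpha}u$ is an admissible compact homotopy, provided no fixed points appear on $\partial B(0,d)$.

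The first preparatory step is to reinterpret the fixed points of $T_{1,\alpha}$ as non-negative solutions of \eqref{eq-lem-deg0}. By the construction of $f$ and Theorem~\ref{th-operator}, a fixed point of $T_{1,\alpha}$ is a solution of the Neumann BVP with the modified nonlinearity $f(r,u)+\alpha v(r)$. A standard maximum principle argument --- integrating the equation and tracking the sign of the monotone map $r\mapsto r^{N-1}\varphi(u'(r))$ on the set $\{u<0\}$, where $-u+\alpha v\geq 0$ thanks to $\alpha\geq 0$ and $v\geq 0$ --- excludes negative values, so the fixed points of $T_{1,\alpha}$ coincide exactly with the non-negative solutions of \eqref{eq-lem-deg0}.

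Combining this reinterpretation with $(H_{1})$, I would conclude that $u\neq T_{1,\alpha}u$ for every $u\in\partial B(0,d)$ and every $\alpha\in\mathopen{[}0,\alpha_{0}\mathclose{]}$. The homotopy invariance of the Leray--Schauder degree therefore yields
\begin{equation*}
\mathrm{deg}_{\mathrm{LS}}(\mathrm{Id}-T_{1,0},B(0,d),0) = \mathrm{deg}_{\mathrm{LS}}(\mathrm{Id}-T_{1,\alpha_{0}},B(0,d),0).
\end{equation*}
On the other hand, at $\alpha=\alpha_{0}$ the same reinterpretation together with $(H_{2})$ shows that $T_{1,\alpha_{0}}$ has no fixed points at all in $\overline{B(0,d)}$; by the solution property of the degree, the right-hand side above vanishes, which gives the claim.

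The main obstacle I anticipate is the maximum principle step: unlike in the semilinear setting, the quasilinear operator $\varphi(u')$ prevents a clean multiplication-by-$u_{-}$ test, so one has to argue pointwise via the integral identity \eqref{eq-phi-th2.1} and the Neumann conditions $u'(0)=u'(R)=0$, paying particular attention to the weighted behaviour at $r=0$ when $N\geq 2$. It is precisely the sign condition $v\geq 0$ (and $v\not\equiv 0$, which enters elsewhere in the scheme) that rules out spurious negative constant solutions and makes the homotopy admissible.
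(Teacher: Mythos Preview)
Your proposal is correct and follows essentially the same approach as the paper: reinterpret fixed points of $T_{1,\alpha}$ as non-negative solutions via the maximum principle, use $(H_{1})$ to ensure the homotopy $\alpha\mapsto T_{1,\alpha}$ is admissible on $\partial B(0,d)$, and then use $(H_{2})$ to conclude that the degree at $\alpha=\alpha_{0}$ vanishes. The only cosmetic difference is that the paper phrases the admissibility for all $\alpha\geq 0$ rather than just $\alpha\in\mathopen{[}0,\alpha_{0}\mathclose{]}$, and your closing remark that $v\not\equiv 0$ is what rules out negative solutions is slightly off---it is $v\geq 0$ that drives the maximum principle here, while $v\not\equiv 0$ is needed only when verifying $(H_{2})$ in applications.
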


\begin{proof}
First, recalling Theorem~\ref{th-operator}, we have that $u(r)$ is a solution of 
\begin{equation}\label{eq-lem-deg0f}
\begin{cases}
\, \bigl{(} r^{N-1}\varphi(u')\bigr{)}' + r^{N-1} \bigl{[} f(r,u) + \alpha v(r) \bigr{]}= 0, \\
\, u'(0)=u'(R)=0,
\end{cases}
\end{equation}
if and only if $u = T_{1,\alpha} u$.
By maximum principle arguments, since $v \geq 0$, every solution of \eqref{eq-lem-deg0f} is non-negative and thus solves \eqref{eq-lem-deg0}.
Therefore, for every $\alpha \geq 0$, condition $(H_{1})$ ensures that 
\begin{equation*}
u \neq T_{1,\alpha} u, \quad \text{for all $u\in \partial B(0,d)$.}
\end{equation*}
We deduce that $\mathrm{deg}_{\mathrm{LS}}(\mathrm{Id}-T_{1,\alpha},B(0,d),0)$ is well-defined for every $\alpha \geq 0$. As a final step, we apply the homotopy invariance property of the degree to conclude that
\begin{equation*}
\mathrm{deg}_{\mathrm{LS}}(\mathrm{Id}-T_{1,0},B(0,d),0)=\mathrm{deg}_{\mathrm{LS}}(\mathrm{Id}-T_{1,\alpha_{0}},B(0,d),0)=0,
\end{equation*}
where the last equality follows from hypothesis $(H_{2})$.
\end{proof}

The second lemma ensures non-zero degree. Notice that here we need to assume further conditions on $a(r)$ and $g(u)$.

\begin{lemma}\label{lem-deg1}
Let $a \colon \mathopen{[}0,R\mathclose{]} \to \mathbb{R}$ be an $L^1$-function satisfying $(a_{\#})$, let $g \colon \mathopen{[}0,+\infty\mathclose{[} \to \mathopen{[}0,+\infty\mathclose{[}$ be a continuous function satisfying $(g_{*})$, and $\lambda>0$. Let $d>0$ and assume that the following property holds:
\begin{itemize}
\item[$(H_{3})$]
If $\vartheta\in \mathopen{]}0,1\mathclose{]}$ and $u(r)$ is a non-negative solution of
\begin{equation}\label{eq-lem-deg1}
\begin{cases}
\, \bigl{(} r^{N-1}\varphi(u')\bigr{)}' + \vartheta \lambda r^{N-1} a(r) g(u) = 0, \\
\, u'(0)=u'(R)=0,
\end{cases}
\end{equation}
then $\|u\|_{\infty} \neq d$.
\end{itemize}
Then, it holds that $\mathrm{deg}_{\mathrm{LS}}(\mathrm{Id}-T_{1,0},B(0,d),0) = 1$.
\end{lemma}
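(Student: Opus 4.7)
My plan is to prove the lemma by homotoping along the parameter $\vartheta$, reducing the computation to a finite-dimensional degree at $\vartheta=0$, where the operator $T_{0,0}$ collapses onto the one-dimensional subspace $V$ of constant functions of $\mathcal{C}(\mathopen{[}0,R\mathclose{]})$.

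First, I would verify that the family $\{T_{\vartheta,0}\}_{\vartheta\in\mathopen{[}0,1\mathclose{]}}$ has no fixed point on $\partial B(0,d)$, so that the homotopy invariance of the Leray--Schauder degree applies. For $\vartheta\in\mathopen{]}0,1\mathclose{]}$, by Theorem~\ref{th-operator} a fixed point $u=T_{\vartheta,0}u$ solves the Neumann problem $(r^{N-1}\varphi(u'))'+\vartheta r^{N-1}f(r,u)=0$. A maximum principle argument of the type recalled just before Lemma~\ref{lem-deg0} --- based on the non-increasing monotonicity of $r\mapsto r^{N-1}\varphi(u'(r))$ on the set where $u<0$ (there $f(r,u)=-u>0$) combined with the Neumann boundary data --- forces $u\geq 0$, so that $u$ is a non-negative solution of \eqref{eq-lem-deg1} and hypothesis $(H_{3})$ gives $\|u\|_{\infty}\neq d$. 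For $\vartheta=0$, the operator $T_{0,0}$ takes values in $V$, and any fixed point is a constant $c$ satisfying $\int_{0}^{R}\zeta^{N-1}f(\zeta,c)\,\mathrm{d}\zeta=0$; by $(g_{*})$ and $(a_{\#})$, this integral equals $\lambda g(c)\int_{0}^{R}\zeta^{N-1}a(\zeta)\,\mathrm{d}\zeta<0$ for $c>0$ and equals $-cR^{N}/N>0$ for $c<0$, so the unique fixed point is $c=0$, which has norm $0\neq d$.

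By homotopy invariance, $\mathrm{deg}_{\mathrm{LS}}(\mathrm{Id}-T_{1,0},B(0,d),0)=\mathrm{deg}_{\mathrm{LS}}(\mathrm{Id}-T_{0,0},B(0,d),0)$. Since $T_{0,0}$ has range in the closed one-dimensional subspace $V\cong\mathbb{R}$ of constant functions, the reduction property of the Leray--Schauder degree yields
\[
\mathrm{deg}_{\mathrm{LS}}(\mathrm{Id}-T_{0,0},B(0,d),0)=\mathrm{deg}_{\mathrm{B}}(\Phi,\mathopen{]}-d,d\mathclose{[},0),
\]
where the restriction of $\mathrm{Id}-T_{0,0}$ to $V$ is the continuous map $\Phi(c):=R^{1-N}\int_{0}^{R}\zeta^{N-1}f(\zeta,c)\,\mathrm{d}\zeta$. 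The sign analysis above shows that $\Phi$ is strictly positive on $\mathopen{]}-d,0\mathclose{[}$, strictly negative on $\mathopen{]}0,d\mathclose{[}$, with unique zero at $c=0$; elementary one-dimensional Brouwer degree theory then delivers the claimed value.

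The main obstacle I anticipate is the maximum principle step: since $(H_{3})$ only prescribes information on \emph{non-negative} solutions of \eqref{eq-lem-deg1}, excluding fixed points of $T_{\vartheta,0}$ that take negative values requires careful use of both the specific extension $f(r,u)=-u$ for $u<0$ and the Neumann boundary data, uniformly across $\vartheta\in\mathopen{[}0,1\mathclose{]}$; the point is delicate since at $\vartheta=0$ the equation degenerates and the argument must be supplemented by the direct verification above.
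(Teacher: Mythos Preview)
Your proposal is correct and follows essentially the same route as the paper: verify admissibility of the homotopy $\vartheta\mapsto T_{\vartheta,0}$ via the maximum principle and $(H_3)$, then at $\vartheta=0$ reduce to the one-dimensional subspace of constants and compute the Brouwer degree of the averaged map $s\mapsto R^{1-N}\int_0^R\zeta^{N-1}f(\zeta,s)\,\mathrm{d}\zeta$ using the sign information coming from $(a_\#)$, $(g_*)$ and the extension $f(r,u)=-u$ for $u<0$. The only cosmetic difference is that you invoke the reduction property of the Leray--Schauder degree where the paper speaks of excision, and you handle the case $\vartheta=0$ on $\partial B(0,d)$ by a separate direct check; both are equivalent.
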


\begin{proof}
First, recalling Theorem~\ref{th-operator}, we have that $u(r)$ is a solution of
\begin{equation}\label{eq-lem-deg11f}
\begin{cases}
\, \bigl{(} r^{N-1}\varphi(u')\bigr{)}' + \vartheta r^{N-1} f(r,u) = 0, \\
\, u'(0)=u'(R)=0,
\end{cases}
\end{equation}
if and only if $u = T_{\vartheta,0} u$.
By maximum principle arguments, every solution of \eqref{eq-lem-deg11f} is non-negative and thus solves \eqref{eq-lem-deg1}.

Hypothesis $(H_{3})$ implies that $\mathrm{deg}_{\mathrm{LS}}(\mathrm{Id}-T_{\vartheta,0},B(0,d),0)$ is well-defined for every $\vartheta\in\mathopen{]}0,1\mathclose{]}$.
Let us consider the case $\vartheta=0$. The fixed point problem $u = T_{0,0} u$ reduces to
\begin{equation*}
u(r) = (T_{0,0} u)(r) = u(0) - \dfrac{1}{R^{N-1}} \int_{0}^{R} \zeta^{N-1} f(\zeta,u(\zeta)) \,\mathrm{d}\zeta, \quad u\in \mathcal{C}(\mathopen{[}0,R\mathclose{]}),
\end{equation*}
whose solutions are constant functions.
Observing that
\begin{equation*}
(\mathrm{Id}-T_{0,0}) u \equiv  - \dfrac{1}{R^{N-1}} \int_{0}^{R} \zeta^{N-1} f(\zeta,s) \,\mathrm{d}\zeta, \quad \text{for all $u\in \mathcal{C}(\mathopen{[}0,R\mathclose{]})$ with $u\equiv s\in\mathbb{R}$,}
\end{equation*}
we consider the function
\begin{equation*}
f^{\#}(s) = \dfrac{1}{R^{N-1}} \int_{0}^{R} \zeta^{N-1} f(\zeta,s) \,\mathrm{d}\zeta =
\begin{cases}
\, -\dfrac{R}{N} s, & \text{if $s \leq 0$,} \\
\, \dfrac{\lambda}{R^{N-1}} \biggl{(}\displaystyle \int_{0}^{R} \zeta^{N-1}a(\zeta) \,\mathrm{d}\zeta \biggr{)} g(s), & \text{if $s \geq 0$.}
\end{cases}
\end{equation*}
By hypothesis $(a_{\#})$ we deduce that
\begin{equation*}
\int_{0}^{R} r^{N-1} a(r) \,\mathrm{d}r = \dfrac{1}{\omega_{N}} \int_{B}a(|x|) \,\mathrm{d}x < 0
\end{equation*}
(where $\omega_{N}$ is the measure of the unit sphere in $\mathbb{R}^{N}$) 
and, consequently, we obtain that $f^{\#}(s)s < 0$ for $s \neq 0$.
As a consequence of the excision property, we can reduce the study of the degree of $\mathrm{Id}-T_{0,0}$ in the set $B(0,d)\cap\mathbb{R}=\mathopen{]}-d,d\mathclose{[}$.
Since $f^{\#}(s)$ has no zeros on $\partial(B(0,d)\cap\mathbb{R})=\{\pm d\}$ and, more precisely, $f^{\#}(d)<0<f^{\#}(-d)$, we deduce that
\begin{equation*}
\mathrm{deg}_{\mathrm{LS}}(\mathrm{Id}-T_{0,0},B(0,d),0) = \mathrm{deg}_{\mathrm{B}}(-f^{\#},\mathopen{]}-d,d\mathclose{[},0) = 1.
\end{equation*}
Therefore, by the homotopy invariance property of the degree we infer that
\begin{equation*}
\mathrm{deg}_{\mathrm{LS}}(\mathrm{Id}-T_{1,0},B(0,d),0) = \mathrm{deg}_{\mathrm{LS}}(\mathrm{Id}-T_{0,0},B(0,d),0) = 1.
\end{equation*}
This concludes the proof.
\end{proof}

\section{Proof of Theorem~\ref{th-main}}\label{section-3}

We look for radial solutions of problem \eqref{eq-main-pde}, meant as solutions of the one-dimensional boundary value problem \eqref{eq-main}. To investigate this, we rely on the abstract setting presented in Section~\ref{section-2}.

We divide the arguments of the proofs into some steps. 

\subsubsection*{Step~1. Fixing the constants $\delta^{*}$ and $\lambda^{*}$.}
We first set $I^{+}_{i}=\mathopen{[}\sigma_{i},\tau_{i}\mathclose{]}$ for $i=1,\ldots,m$, and we assume that these intervals are ordered in the natural way, that is
\begin{equation*}
0 \leq \sigma_{1} < \tau_{1} < \ldots < \sigma_m < \tau_m \leq R.
\end{equation*}
Notice that, whenever $a(r)$ vanishes on one or more non-degenerate intervals, the choice of some of the values $\sigma_{i}$ and $\tau_{i}$ is not unique.
Let $\varepsilon>0$ be such that
\begin{equation*}
\varepsilon<\dfrac{ |I^{+}_{i} |}{4} \quad \text{ and } \quad \int_{\sigma_{i}+2\varepsilon}^{\tau_{i}-2\varepsilon}  r^{N-1} a(r)\,\mathrm{d}r > 0, \quad \text{for every $i = 1,\ldots,m$,}
\end{equation*}
and let us define
\begin{equation*}
\delta^{*} = \dfrac{2^{N-1}\varepsilon^{N}}{R^{N-1}}
\end{equation*}
and
\begin{equation*}
\delta_{*} = \min_{i=1,\ldots,m} 
\dfrac{\delta^{*}}{1+2\varphi\biggl{(}\dfrac{1}{2} \biggr{)} |I^{+}_{i}| \dfrac{\tau_{i}^{2N-2}}{\sigma_{i}^{N-1}(2\varepsilon)^{N}}},
\end{equation*}
if $\sigma_{1}\neq0$, or 
\begin{equation*}
\delta_{*} = \min \left\{ 
\dfrac{\delta^{*}-\gamma}{1+2\varphi\biggl{(}\dfrac{1}{2} \biggr{)} |I^{+}_{1}| \dfrac{\tau_{1}^{2N-2}}{\gamma^{N-1}(2\varepsilon)^{N}}},
\min_{i=2,\ldots,m} \dfrac{\delta^{*}}{1+2\varphi\biggl{(}\dfrac{1}{2} \biggr{)} |I^{+}_{i}| \dfrac{\tau_{i}^{2N-2}}{\sigma_{i}^{N-1}(2\varepsilon)^{N}}} \right\}.
\end{equation*}
if $\sigma_{1}=0$, where $\gamma=\min\{\delta^{*},\tau_{1}\}/2$. Notice that, in both cases, $\delta_{*}\in\mathopen{]}0,\delta^{*}\mathclose{[}$.
Moreover, let
\begin{equation*}
\lambda^{*} = \max_{i=1,\ldots,m}\dfrac{2 R^{N-1}\varphi(1/2)}{ \min \bigl{\{} g(u) \colon u \in \mathopen{[} \delta_{*},\delta^{*} \mathclose{]} \bigr{\}} \displaystyle{\int_{\sigma_{i}+2\varepsilon}^{\tau_{i}-2\varepsilon} r^{N-1} a(r)\,\mathrm{d}r}}.
\end{equation*}
From now on, let $\lambda > \lambda^{*}$ be fixed.

\subsubsection*{Step~2. Computation of the degree in $B(0,\delta^{*})$.}
We are going to apply Lemma~\ref{lem-deg0} to the open ball $B(0,\delta^{*})$ taking as $v(r)$ the indicator function of the set $\bigcup_{i} I^{+}_{i}$.

First we verify condition $(H_{1})$. We suppose by contradiction that there exist $\alpha \geq 0$ and a non-negative solution $u(r)$ to \eqref{eq-lem-deg0} such that $\|u\|_{\infty} = \delta^{*}$. 

\smallskip
\noindent
\textit{Claim~1.} There exists $i=\{1,\ldots,m\}$ such that
\begin{equation}\label{eq-cl1}
\max_{r\in I^{+}_{i}} u(r) = \delta^{*}.
\end{equation}
Since $v\equiv0$ on $\mathopen{[}0,R\mathclose{]}\setminus \bigcup_{i} I^{+}_{i}$, by conditions $(a_{*})$ and $(g_{*})$ we deduce that the map $r\to r^{N-1}\varphi(u'(r))$ is non-increasing on each interval $I^{+}_{i}$ and non-decreasing on each interval $J \subseteq \mathopen{[}0,R\mathclose{]}\setminus \bigcup_{i} I^{+}_{i}$. 
We show that
\begin{equation}\label{eq-convexity}
\max_{r\in J}u(r) = \max_{r\in\partial J} u(r),
\end{equation}
for every interval $J \subseteq \mathopen{[}0,R\mathclose{]}\setminus \bigcup_{i} I^{+}_{i}$.
Indeed, let $J=\mathopen{[}\tau,\sigma\mathclose{]}$ and $\hat{r}\in\mathopen{]}\tau,\sigma\mathclose{[}$. If $u'(\hat{r}) \geq 0$, then $u'(r) \geq 0$ for all $r\in\mathopen{[}\hat{r},\sigma\mathclose{]}$, and so $u(\hat{r})\leq u(\sigma)$. Analogously, if $u'(\hat{r}) \leq 0$, then $u'(r) \leq 0$ for all $r\in\mathopen{[}\tau,\hat{r}\mathclose{]}$, and so $u(\hat{r})\leq u(\tau)$. Therefore, \eqref{eq-convexity} holds. 

We further observe that if $\tau=0$ then $u'(\tau)=0$ and so $\max_{r\in J}u(r) = u(\sigma)$; if $\sigma=R$ then $u'(\sigma)=0$ and so $\max_{r\in J}u(t) = u(\tau)$. As a consequence of this and \eqref{eq-convexity}, \eqref{eq-cl1} follows.

From now on we focus on the behavior of $u(r)$ on $I^{+}_{i}=\mathopen{[}\sigma_{i},\tau_{i}\mathclose{]}$. 

\smallskip
\noindent
\textit{Claim~2.} It holds that
\begin{equation}\label{estim-u'}
|u'(r)| \leq \dfrac{\tau_{i}^{N-1}}{(2\varepsilon)^{N}} \, u(r), \quad \text{for all $r\in \mathopen{[}\sigma_{i}+2\varepsilon,\tau_{i}-2\varepsilon\mathclose{]}$.}
\end{equation}
Indeed, let us fix $r\in \mathopen{[}\sigma_{i}+2\varepsilon,\tau_{i}-2\varepsilon\mathclose{]}$. If $u'(r)=0$ then the estimate is obvious. If $u'(r)>0$, by using the monotonicity of the map $r\to r^{N-1}\varphi(u'(r))$, we have that 
\begin{equation*}
u'(\xi) \geq \varphi^{-1} \biggl{(} \biggl{(}\dfrac{r}{\xi}\biggr{)}^{\!N-1} \varphi(u'(r)) \biggr{)} \geq \varphi^{-1} \bigl{(} \varphi(u'(r)) \bigr{)} = u'(r), \quad \text{for all $\xi\in\mathopen{]}\sigma_{i},r\mathclose{]}$.}
\end{equation*}
By integrating the above inequality in $\mathopen{[}\sigma_{i},r\mathclose{]}$ we obtain 
\begin{equation*}
u(r)\geq u(r)-u(\sigma_{i})= \int_{\sigma_{i}}^{r} u'(\xi)\,\mathrm{d}\xi
\geq (r-\sigma_{i}) u'(r)\geq 2\varepsilon u'(r) \geq \dfrac{(2\varepsilon)^{N}}{\tau_{i}^{N-1}} u'(r),
\end{equation*}
where the last inequality follows from $2\varepsilon<\tau_{i}$. This implies~\eqref{estim-u'}. As last case, if $u'(r)<0$, by arguing as above, we have that 
\begin{align*}
-u'(\xi) &\geq \varphi^{-1} \biggl{(} \biggl{(}\dfrac{r}{\xi}\biggr{)}^{\!N-1} \varphi(-u'(r)) \biggr{)} 
\geq -\biggl{(}\dfrac{r}{\xi}\biggr{)}^{\!N-1}  u'(r)
\\
& \geq -\biggl{(}\dfrac{r}{\tau_{i}}\biggr{)}^{\!N-1}  u'(r), \quad \text{for all $\xi\in\mathopen{[}r,\tau_{i}\mathclose{]}$,}
\end{align*}
where we have used the oddness of $\varphi$ and $\varphi^{-1}$, and the elementary inequality
\begin{equation*}
\varphi^{-1} \bigl{(} \vartheta \varphi(s) \bigr{)} \geq \vartheta s, \quad \text{for all $\vartheta\in\mathopen{[}0,1\mathclose{]}$ and $s\in\mathopen{[}0,1\mathclose{[}$,}
\end{equation*}
coming from the convexity of $\varphi$ in $\mathopen{[}0,1\mathclose{[}$.
Then, by integrating in $\mathopen{[}r,\tau_{i}\mathclose{]}$ we obtain 
\begin{equation*}
u(r)\geq u(r)-u(\tau_{i}) = - \int_{r}^{\tau_{i}} u'(\xi)\,\mathrm{d}\xi 
\geq -(\tau_{i}-r) \biggl{(}\dfrac{r}{\tau_{i}}\biggr{)}^{\!N-1}u'(r) 
\geq - \dfrac{(2\varepsilon)^{N}}{\tau_{i}^{N-1}} u'(r),
\end{equation*}
finally implying \eqref{estim-u'}.

For further convenience, we observe that from \eqref{estim-u'} we have in particular that
\begin{equation}\label{u'_1_2}
|u'(r)| \leq \dfrac{1}{2}, \quad \text{for all $r \in \mathopen{[}\sigma_{i}+2\varepsilon,\tau_{i}-2\varepsilon\mathclose{]}$.}
\end{equation}

\smallskip
\noindent
\textit{Claim~3.} It holds that
\begin{equation}\label{eq-delta}
\min_{r\in \mathopen{[}\sigma_{i}+2\varepsilon,\tau_{i}-2\varepsilon\mathclose{]}} u(r)\geq \delta_{*}.
\end{equation}
To show this, let $r^{*}\in I^{+}_{i}$ and $\check{r}\in\mathopen{[}\sigma_{i}+2\varepsilon,\tau_{i}-2\varepsilon\mathclose{]}$ be such that
\begin{equation*}
u(r^{*}) = \max_{r\in \mathopen{[}\sigma_{i},\tau_{i}\mathclose{]}} u(r) = \delta^{*}, \quad u(\check{r}) = \min_{r\in \mathopen{[}\sigma_{i}+2\varepsilon,\tau_{i}-2\varepsilon\mathclose{]}} u(r).
\end{equation*}
The case $r^{*}=\check{r}$ is trivial. So we consider the two cases: $r^{*}<\check{r}$ and $r^{*}>\check{r}$. 

If $\sigma_{i} \leq r^{*} < \check{r}\in \mathopen{[}\sigma_{i}+2\varepsilon,\tau_{i}-2\varepsilon\mathclose{]}$, then $u'(r^{*}) \leq 0$. 
Using the monotonicity of $r\mapsto r^{N-1} \varphi(u'(r))$ in $\mathopen{[}\sigma_{i},\tau_{i}\mathclose{]}$, we deduce that
\begin{equation}\label{eq-monot1}
u'(\xi) \geq \varphi^{-1} \biggl{(} \biggl{(} \dfrac{\zeta}{\xi}\biggr{)}^{\! N-1} \varphi(u'(\zeta)) \biggr{)}, \quad \text{for all $\xi,\zeta\in \mathopen{]}\sigma_{i},\tau_{i}\mathclose{]}$ with $\xi\leq\zeta$,}
\end{equation}
and so, from \eqref{eq-monot1} with $\xi=r^{*}$, $u'(\zeta) \leq 0$ for all $\zeta\in \mathopen{[}r^{*},\check{r}\mathclose{]}$, in particular $u'(\check{r}) \leq 0$. 
Assume now $i\geq 2$, or $i =1$ and $\sigma_{1}\neq 0$. 
An integration of \eqref{eq-monot1} with $\zeta=\check{r}$ on $\mathopen{[}r^{*},\check{r}\mathclose{]}$ leads to
\begin{align*}
\delta^{*} - u(\check{r}) &= - \int_{r^{*}}^{\check{r}} u'(\xi) \,\mathrm{d}\xi 
\leq \int_{r^{*}}^{\check{r}} \varphi^{-1} \biggl{(} \biggl{(} \dfrac{\check{r}}{\xi}\biggr{)}^{\! N-1} \varphi(-u'(\check{r})) \biggr{)} \,\mathrm{d}\xi
\\ &\leq \int_{r^{*}}^{\check{r}} \biggl{(} \dfrac{\check{r}}{\xi}\biggr{)}^{\! N-1} \varphi(-u'(\check{r})) \,\mathrm{d}\xi
\leq (\check{r}-r^{*}) \biggl{(} \dfrac{\check{r}}{r^{*}}\biggr{)}^{\! N-1} \varphi(-u'(\check{r}))
\\ &\leq |I^{+}_{i}| \biggl{(} \dfrac{\tau_{i}}{\sigma_{i}}\biggr{)}^{\! N-1} \varphi(-u'(\check{r})).
\end{align*}
Using \eqref{u'_1_2} together with the fact that
\begin{equation*}
\varphi(s) \leq 2\varphi\biggl{(}\dfrac{1}{2} \biggr{)}s, \quad \text{for all $s\in\biggl{[}0,\dfrac{1}{2}\biggr{]}$,}
\end{equation*}
and estimate \eqref{estim-u'}, we obtain
\begin{equation*}
\delta^{*} - u(\check{r}) \leq 2\varphi\biggl{(}\dfrac{1}{2} \biggr{)} |I^{+}_{i}| \dfrac{\tau_{i}^{2N-2}}{\sigma_{i}^{N-1}(2\varepsilon)^{N}} \, u(\check{r}).
\end{equation*}
Then, \eqref{eq-delta} holds.
Consider now the case $i=1$ and $\sigma_{1}=0$. Then, recalling that $u'(\xi)\leq 0$ on $\mathopen{[}0,\tau_{1}\mathclose{]}$, we have $r^{*}=0$ and $\check{r}=\tau_{1}-2 \varepsilon$. Now, arguing as above and using the fact that $|u'(\xi)|<1$ on $\mathopen{[}0,\tau_{1}\mathclose{]}$, we deduce
\begin{align*}
\delta^{*} - u(\tau_{1}-2\varepsilon) &= - \int_{0}^{\tau_{1}-2 \varepsilon} u'(\xi) \,\mathrm{d}\xi 
= - \int_{0}^{\gamma} u'(\xi) \,\mathrm{d}\xi - \int_{\gamma}^{\tau_{1}-2\varepsilon} u'(\xi) \,\mathrm{d}\xi
\\&\leq \gamma + \int_{\gamma}^{\tau_{1}-2\varepsilon} \varphi^{-1} \biggl{(} \biggl{(} \dfrac{\tau_{1}-2\varepsilon}{\xi}\biggr{)}^{\! N-1} \varphi(-u'(\tau_{1}-2\varepsilon)) \biggr{)} \,\mathrm{d}\xi
\\ &\leq \gamma + 2\varphi\biggl{(}\dfrac{1}{2} \biggr{)} |I^{+}_{1}| \dfrac{\tau_{1}^{2N-2}}{\gamma^{N-1}(2\varepsilon)^{N}} \, u(\tau_{1}-2 \varepsilon).
\end{align*}
Then, \eqref{eq-delta} holds.

On the other hand, if $\tau_{i}\geq r^{*} > \check{r}\in \mathopen{[}\sigma_{i}+2\varepsilon,\tau_{i}-2\varepsilon\mathclose{]}$, then $u'(r^{*}) \geq 0$. Notice that this can happen only when $i\geq 2$, or $i=1$ and $\sigma_{1}\neq 0$.
Using the monotonicity of $r\mapsto r^{N-1} \varphi(u'(r))$ in $\mathopen{[}\sigma_{i},\tau_{i}\mathclose{]}$, we deduce that
\begin{equation}\label{eq-monot2}
\varphi^{-1} \biggl{(} \biggl{(} \dfrac{\xi}{\zeta}\biggr{)}^{\! N-1} \varphi(u'(\xi)) \biggr{)} \geq u'(\zeta), \quad \text{for all $\xi,\zeta\in \mathopen{]}\sigma_{i},\tau_{i}\mathclose{]}$ with $\xi\leq\zeta$,}
\end{equation}
and so, from \eqref{eq-monot2} with $\zeta=r^{*}$, $u'(\xi) \geq 0$ for all $\xi\in \mathopen{[}\check{r},r^{*}\mathclose{]}$, in particular $u'(\check{r}) \geq 0$. 
An integration of \eqref{eq-monot2} with $\xi=\check{r}$ on $\mathopen{[}\check{r},r^{*}\mathclose{]}$ and an application of \eqref{estim-u'} lead to
\begin{align*}
\delta^{*} - u(\check{r}) &= \int_{\check{r}}^{r^{*}} u'(\zeta) \,\mathrm{d}\zeta
\leq \int_{\check{r}}^{r^{*}} \varphi^{-1} \biggl{(} \biggl{(} \dfrac{\check{r}}{\zeta}\biggr{)}^{\! N-1} \varphi(u'(\check{r})) \biggr{)} \,\mathrm{d}\zeta
\\ &\leq \int_{\check{r}}^{r^{*}}\varphi^{-1} \bigl{(} \varphi(u'(\check{r})) \bigr{)} \,\mathrm{d}\zeta
\leq |I^{+}_{i}| \dfrac{\tau_{i}^{N-1}}{(2\varepsilon)^{N}} \, u(\check{r})
\leq |I^{+}_{i}| \dfrac{\tau_{i}^{N-1}}{(2\varepsilon)^{N}} \dfrac{\tau_{i}^{N-1}}{\sigma_{i}^{N-1}} \, u(\check{r}).
\end{align*}
Then, we have \eqref{eq-delta}.

\smallskip

We are now ready to verify condition $(H_{1})$ of Lemma~\ref{lem-deg0}. We integrate the equation in \eqref{eq-lem-deg0} on $\mathopen{[}\sigma_{i}+2\varepsilon,\tau_{i}-2\varepsilon\mathclose{]}$ and, recalling \eqref{u'_1_2}, \eqref{eq-delta} and that $\varphi$ is odd, we obtain
\begin{equation*}
\lambda \min \bigl{\{} g(u) \colon u \in \mathopen{[}\delta_{*},\delta^{*} \mathclose{]}\bigr{\}} \int_{\sigma_{i}+2\varepsilon}^{\tau_{i}-2\varepsilon} r^{N-1}a(r)\,\mathrm{d}r \leq 2 R^{N-1}\varphi \biggl{(} \dfrac{1}{2}\biggr{)}, 
\end{equation*}
a contradiction with respect to $\lambda > \lambda^{*}$.

As for assumption $(H_{2})$, we integrate the equation in \eqref{eq-lem-deg0} in $\mathopen{[}0,R\mathclose{]}$ and, passing to the absolute value, we deduce
\begin{equation*}
\alpha \| r^{N-1}v \|_{L^{1}} \leq \lambda R^{N-1} \|a\|_{L^{1}} \max_{u \in \mathopen{[}0,\delta^{*}\mathclose{]}} g(u).
\end{equation*}
A contradiction follows for $\alpha$ sufficiently large. From Lemma~\ref{lem-deg0} we thus obtain
\begin{equation}\label{deg-B0rho*}
\mathrm{deg}_{\mathrm{LS}}(\mathrm{Id}-T_{1,0},B(0,\delta^{*})) = 0.
\end{equation}

\subsubsection*{Step~3. Fixing the constant $d^{*}$ and computation of the degree in $B(0,d^{*})$.}
First, we claim that there exists $d^{*}\in\mathopen{]}0,\delta^{*}\mathclose{[}$ such that, for every $\vartheta\in \mathopen{]}0,1\mathclose{]}$, every non-negative  solution $u(r)$ of \eqref{eq-lem-deg1} with $\|u\|_{\infty} \leq d^{*}$ is such that $u\equiv0$.

We assume, by contradiction, that there exists a sequence $\{u_{n}\}_{n}$ of non-negative solutions of \eqref{eq-lem-deg1} for $\vartheta=\vartheta_{n}$ satisfying $0<\|u_{n}\|_{\infty}=d_{n}\to0$.
We define
\begin{equation*}
v_{n}(r) = \dfrac{u_{n}(r)}{d_{n}}, \quad r\in\mathopen{[}0,R\mathclose{]},
\end{equation*}
and observe that $v_{n}(r)$ is a non-negative solution of the Neumann problem associated with
\begin{equation}\label{eq-v_n}
\Biggl{(} \dfrac{r^{N-1} v_{n}'}{\sqrt{1-(u_{n}')^{2}}}\Biggr{)}' + \vartheta_{n} \lambda r^{N-1} a(r) q(u_{n}(r)) v_{n} = 0,
\end{equation}
where we set $q(u) = g(u)/u$ for $u > 0$ and $q(0) = 0$. 
Integrating equation \eqref{eq-v_n} between $0$ and $r$ and dividing by $r^{N-1}$ we obtain that
\begin{equation*}
\dfrac{v_{n}'(r)}{\sqrt{1-u_{n}'(r)^{2}}} = -\dfrac{1}{r^{N-1}} \vartheta_{n} \lambda \int_{0}^{r} \xi^{N-1}a(\xi)q(u_{n}(\xi))v_{n}(\xi) \,\mathrm{d}\xi, \quad \text{for all $r \in \mathopen{]}0,R\mathclose{]}$.}
\end{equation*}
Passing to the absolute value we have
\begin{equation*}
|v_{n}'(r)| \leq \dfrac{|v_{n}'(r)|}{\sqrt{1-u_{n}'(r)^{2}}}
\leq \lambda \int_{0}^{R} |a(\xi)| |q(u_{n}(\xi))||v_{n}(\xi)| \,\mathrm{d}\xi, \quad \text{for all $r \in \mathopen{]}0,R\mathclose{]}$.}
\end{equation*}
Therefore, using the first condition in $(g_{0})$ and the fact that $\| v_{n} \|_{\infty} \leq 1$, we obtain that $v_{n}' \to 0$ uniformly.
As a consequence, $v_{n} \to 1$ uniformly in $\mathopen{[}0,R\mathclose{]}$, since
\begin{equation*}
|v_{n}(r) - 1 | = |v_{n}(r) - v_{n}(\hat{\eta}_{n}) | \leq \int_{0}^{R} | v_{n}'(\xi) | \,\mathrm{d}\xi, \quad \text{for all $r \in \mathopen{[}0,R\mathclose{]}$,}
\end{equation*}
where $\hat{\eta}_{n}\in \mathopen{[}0,R\mathclose{]}$ is such that $u_{n}(\hat{\eta}_{n}) = \|u_{n}\|_{\infty} = d_{n}$.
An integration of equation \eqref{eq-v_n} in $\mathopen{[}0,R\mathclose{]}$ gives
\begin{equation*}
\int_{0}^{R} r^{N-1} a(r) g(u_{n}(r))\,\mathrm{d}r = 0
\end{equation*}
and hence
\begin{equation*}
\int_{0}^{R} r^{N-1} a(r) g(d_{n})\,\mathrm{d}r + \int_{0}^{R} r^{N-1} a(r)\bigl{[}g(d_{n} v_{n}(r)) - g(d_{n})\bigr{]}\,\mathrm{d}r=0.
\end{equation*}
Dividing by $g(d_{n}) > 0$, we have
\begin{equation*}
0 < - \int_{0}^{R} r^{N-1} a(r)\,\mathrm{d}r \leq R^{N-1}\|a\|_{L^{1}} \sup_{r\in \mathopen{[}0,R\mathclose{]}}\biggl{|}\dfrac{g(d_{n} v_{n}(r))}{g(d_{n})} - 1\biggr{|}.
\end{equation*}
Using the second condition in $(g_{0})$ and recalling that $v_{n} \to 1$ uniformly, we find a contradiction. The claim is thus proved and we can fix $d^{*}\in\mathopen{[}0,\delta^{*}\mathclose{[}$.

Finally, condition $(H_{3})$ of Lemma~\ref{lem-deg1} is trivially satisfied for $d=d^{*}$, and therefore
\begin{equation}\label{deg-B0d*}
\mathrm{deg}_{\mathrm{LS}}(\mathrm{Id}-T_{1,0},B(0,d^{*})) = 1.
\end{equation}

\subsubsection*{Step~4. Fixing the constant $D^{*}$ and computation of the degree in $B(0,D^{*})$.}
First, we claim that there exists $D^{*}>\delta^{*}$ such that, for every $\vartheta\in \mathopen{]}0,1\mathclose{]}$, every non-negative solution $u(r)$ of \eqref{eq-lem-deg1} satisfies $\|u\|_{\infty} < D^{*}$.

We assume, by contradiction, that there exists a sequence $\{u_{n}\}_{n}$ of non-negative solutions of \eqref{eq-lem-deg1} for $\vartheta=\vartheta_{n}$ and $\lambda=\lambda_{n}$ satisfying $\|u_{n}\|_{\infty}=D_{n}\to+\infty$. We proceed similarly to the previous step. We define
\begin{equation*}
v_{n}(r) = \dfrac{u_{n}(r)}{D_{n}}, \quad r\in\mathopen{[}0,R\mathclose{]},
\end{equation*}
which solves the Neumann problem associated with equation \eqref{eq-v_n}. Since $\|u_{n}'\|_{\infty} \leq 1$, we easily find $\|v_{n}'\|_{\infty} \to 0$ and, consequently, $v_{n} \to 1$ uniformly in $\mathopen{[}0,R\mathclose{]}$ (proceeding as shown in Step~3).
Integrating equation \eqref{eq-v_n} and dividing by $g(D_{n}) > 0$, we thus obtain
\begin{equation*}
0 < - \int_{0}^{R} r^{N-1}a(r)\,\mathrm{d}r \leq R^{N-1}\|a\|_{L^{1}} \sup_{r\in \mathopen{[}0,R\mathclose{]}}\biggl{|}\dfrac{g(D_{n} v_{n}(r))}{g(D_{n})} - 1\biggr{|}.
\end{equation*}
Using $(g_{\infty})$, a contradiction easily follows. The claim is thus proved and we can fix $D^{*}>\delta^{*}$.

Finally, condition $(H_{3})$ of Lemma~\ref{lem-deg1} is trivially satisfied for $d=D^{*}$, and therefore
\begin{equation}\label{deg-B0D*}
\mathrm{deg}_{\mathrm{LS}}(\mathrm{Id}-T_{1,0},B(0,D^{*})) = 1.
\end{equation}

\subsubsection*{Step~5. Concluding the proof.}
Starting from the formulas \eqref{deg-B0rho*}, \eqref{deg-B0d*}, \eqref{deg-B0D*} proved in the last three steps, we apply the additivity property of the coincidence degree to obtain
\begin{equation*}
\mathrm{deg}_{\mathrm{LS}}(\mathrm{Id}-T_{1,0},B(0,\delta^{*}) \setminus \overline{B(0,d^{*})}) = -1
\end{equation*}
and
\begin{equation*}
\mathrm{deg}_{\mathrm{LS}}(\mathrm{Id}-T_{1,0},B(0,D^{*}) \setminus \overline{B(0,\delta^{*})}) = 1.
\end{equation*}
As a consequence of the existence property of the degree, there exist two solutions $u_{s}\in B(0,\delta^{*}) \setminus \overline{B(0,d^{*})}$ and $u_{\ell}\in B(0,D^{*}) \setminus \overline{B(0,\delta^{*})}$ of problem \eqref{eq-phi}. Then, $u_{s}(r)$ and $u_{\ell}(r)$ satisfy 
\begin{equation*}
d^{*} < \| u_{s} \|_{\infty} < \delta^{*} < \| u_{\ell} \|_{\infty} < D^{*}.
\end{equation*}
By maximum principle arguments, both these solutions are non-negative and hence they solve \eqref{eq-main}.
It thus remains to show that they are positive, that is, $u(r) > 0$ for every $r \in [0,R]$, for both $u = u_s$ and $u = u_{\ell}$.

By contradiction, assume that $u(r_0) = 0$ for some $r_0 \in [0,R]$. Then $u'(r_0) = 0$, coming from the boundary conditions if $r_0 = 0$ and from the fact that $u(r)$ is non-negative if $r_0 \in (0,R]$.

If $r_0 = 0$, integrating the equation we find
\begin{equation*}
u(r) = - \int_{0}^r \varphi^{-1}\biggl{(}\dfrac{1}{\zeta^{N-1}} \int_{0}^{\zeta} \xi^{N-1}a(\xi)g(u(\xi))\,\mathrm{d}\xi \biggr{)} \,\mathrm{d}\zeta, \quad \text{for all $r \in \mathopen{[}0,R\mathclose{]}$,}
\end{equation*}
implying, since $|\varphi^{-1}(s)|\leq |s|$ for all $s \in \mathbb{R}$, that
\begin{equation*}
|u(r)| \leq \int_{0}^r \int_{0}^{\zeta} |a(\xi)| |g(u(\xi))|\,\mathrm{d}\xi \,\mathrm{d}\zeta 
\leq R \int_{0}^r |a(\xi)| |g(u(\xi))|\,\mathrm{d}\xi, 
\end{equation*}
for all $r \in \mathopen{[}0,R\mathclose{]}$.
Recalling that $g(u)/u \to 0$ for $u \to 0^{+}$, we finally obtain 
\begin{equation*}
|u(r)| \leq M R \int_{0}^r |a(\xi)| |u(\xi)|\,\mathrm{d}\xi , \quad \text{for all $r \in \mathopen{[}0,R\mathclose{]}$,}
\end{equation*}
where $M > 0$ is a suitable constant. By Gronwall's lemma, $u(r) = 0$ for every $r \in \mathopen{[}0,R\mathclose{]}$, a contradiction.

In the case $r_0 \in \mathopen{]}0,R\mathclose{]}$, the contradiction is reached again using the assumption $g(u)/u \to 0$ for $u \to 0^{+}$, which as well known implies
 (together with the smoothness of $\varphi^{-1}$)
that the only solution of the planar system
\begin{equation*}
u' = \varphi^{-1}\biggl{(} \dfrac{v}{r^{N-1}} \biggr{)}, \qquad v' = -r^{N-1} a(r) g(u),
\end{equation*}
satisfying the initial condition $(u(r_0),v(r_0)) = (0,0)$ is the trivial one.
\qed

\bibliographystyle{elsart-num-sort}
\bibliography{BoFe-biblio}

\end{document}